\definecolor{codegreen}{rgb}{0,0.6,0}
\definecolor{codegray}{rgb}{0.5,0.5,0.5}
\definecolor{codepurple}{rgb}{0.58,0,0.82}
\definecolor{backcolour}{rgb}{0.95,0.95,0.92}
\lstdefinestyle{mystyle}{
  backgroundcolor=\color{backcolour},   commentstyle=\color{codegreen},
  keywordstyle=\color{magenta},
  numberstyle=\tiny\color{codegray},
  stringstyle=\color{codepurple},
  basicstyle=\ttfamily\footnotesize,
  breakatwhitespace=false,         
  breaklines=true,                 
  captionpos=b,                    
  keepspaces=true,                 
  numbers=left,                    
  numbersep=5pt,                  
  showspaces=false,                
  showstringspaces=false,
  showtabs=false,                  
  tabsize=2
}
\crefname{equation}{}{}
\numberwithin{equation}{section}
\newtheorem{theorem}{Theorem}[section]
\newtheorem{proposition}[theorem]{Proposition}
\newtheorem{lemma}[theorem]{Lemma}
\newtheorem{claim}[theorem]{Claim}
\newtheorem{corollary}[theorem]{Corollary}
\newtheorem{conjecture}[theorem]{Conjecture}
\newtheorem*{question*}{Question}
\theoremstyle{definition}
\newtheorem{question}[theorem]{Question}
\newtheorem*{definition*}{Definition}
\theoremstyle{remark}
\newtheorem*{remark}{Remark}
\newcommand{\abs}[1]{\left\lvert#1\right\rvert}
\newcommand{\norm}[1]{\left\lVert#1\right\rVert}
\newcommand{\snorm}[1]{\lVert#1\rVert}
\newcommand{\ang}[1]{\left\langle #1 \right\rangle}
\newcommand{\sang}[1]{\langle #1 \rangle}
\newcommand{\ceil}[1]{\left\lceil #1 \right\rceil}
\newcommand{\paren}[1]{\left( #1 \right)}
\newcommand{\wh}{\widehat}
\newcommand{\CC}{\mathbb C}
\newcommand{\EE}{\mathbb E}
\newcommand{\PP}{\mathbb P}
\newcommand{\RR}{\mathbb R}
\newcommand{\ZZ}{\mathbb Z}
\newcommand{\mb}{\mathbb}
\newcommand{\mbf}{\mathbf}
\newcommand{\mbm}{\mathbbm}
\newcommand{\ol}{\overline}
\newcommand{\on}{\operatorname}
\DeclareMathOperator{\Tr}{Tr}
\newcommand{\HS}{\mathrm{HS}}
\DeclareMathOperator{\re}{Re}
\title{Cayley graphs without a bounded eigenbasis}
\author[Sah]{Ashwin Sah}
\author[Sawhney]{Mehtaab Sawhney}
\author[Zhao]{Yufei Zhao}
\thanks{Zhao was supported by NSF Award DMS-1764176, a Sloan Research Fellowship, and the MIT Solomon Buchsbaum Fund.}
\address{Department of Mathematics, Massachusetts Institute of Technology, Cambridge, MA 02139, USA}
\email{\{asah,msawhney,yufeiz\}@mit.edu}
\begin{document}

\begin{abstract}
Does every $n$-vertex Cayley graph have an orthonormal eigenbasis all of whose coordinates are $O(1/\sqrt{n})$?
While the answer is yes for abelian groups, we show that it is no in general.

On the other hand, we show that every $n$-vertex Cayley graph (and more generally, vertex-transitive graph) has an orthonormal basis whose coordinates are all $O(\sqrt{\log n / n})$, and that this bound is nearly best possible.

Our investigation is motivated by a question of Assaf Naor, who proved that random abelian Cayley graphs are small-set expanders, extending a classic result of Alon--Roichman.
His proof relies on the existence of a bounded eigenbasis for abelian Cayley graphs, which we now know cannot hold for general groups.
On the other hand, we navigate around this obstruction and extend Naor's result to nonabelian groups.
\end{abstract}

\maketitle

\section{Introduction}

\subsection{Bounded eigenbasis}

It is a fundamental problem to understand the spectral decomposition of a Cayley graph. Since every vertex in a Cayley graph has the same degree, it does not matter whether we are talking about the adjacency matrix or the Laplacian matrix, but we will stick with the adjacency matrix for concreteness.
Enormous attention has been given to the eigenvalues of Cayley graphs, especially the spectral gap, due to an intimate connection with the expansion properties of the graph \cite{HLW06,Lub12}.
In this paper, we study the complementary question of what can arise as eigenvectors of Cayley graphs.

We adopt the following normalization\footnote{This normalization, viewing $x$ as a function on a set or group equipped with the averaging measure, is different from the normalization used in the abstract, where we use the usual Euclidean distance in $\RR^n$.}. Given a finite set $S$ and a function $x \colon S \to \CC$ (sometimes viewed as a vector $x \in \CC^S$), we denote its $L^p$ norm by
\[
\norm{x}_{L^p(S)}:= \paren{\EE_{s \in S} \abs{x(s)}^p}^{1/p}.
\]
The Hermitian inner product is defined by $\sang{x, y} = \EE_{s \in S} \ol{x(s)} y(s)$. We say that $x$ is \emph{$C$-bounded} if $\snorm{x}_{L^\infty(S)} \le C$, and we say that a set of functions is $C$-bounded if all of its elements are $C$-bounded. In a unitary or orthonormal eigenbasis, each eigenfunction $x$ is normalized as $\snorm{x}_{L^2(S)} = 1$.

Below is the main question that we study. Here our Cayley graphs are unweighted and undirected. A Cayley graph on a finite group $G$ with symmetric generator $S = S^{-1}$ (not containing the identity) has edges of form $(g,sg)$ ranging over all $g\in G$ and $s \in S$.

\begin{question} \label{qn:main}
What is the minimum $C(n)$ so that every $n$-vertex Cayley graph has a $C(n)$-bounded unitary (or orthonormal) eigenbasis?
\end{question}

Every abelian Cayley graph has a $1$-bounded unitary eigenbasis. 
Indeed, given an abelian group $G$, the basis of Fourier characters of $G$ forms an eigenbasis for every Cayley graph on $G$. In particular, all coordinates of such a Fourier basis are are roots of unity.
The existence of a bounded eigenbasis is useful in certain applications.
In fact, the initial motivation for this work is a result of Naor~\cite{Nao12} that proves a certain small-set expansion property of random abelian Cayley graphs, extending a classic result of Alon--Roichman~\cite{AR94} that random Cayley graphs on arbitrary groups are expanders. 
Naor's argument uses that every abelian Cayley graph has a $1$-bounded eigenbasis.
He asks whether his results also hold for nonabelian groups.
Here we show that general Cayley graphs do not always have a bounded eigenbasis, therefore exhibiting an obstruction to Naor's argument for nonabelian groups.
On the other hand, we provide an alternative argument showing that Naor's theorem indeed extends to general groups. See \cref{thm:naor-nonabelian} below for a precise statement.

Our first result below implies that Cayley graphs do not always have a bounded unitary eigenbasis. 
It gives a lower bound $C(n) \gtrsim \sqrt{\log n}/\log\log n$ for \cref{qn:main} for infinitely many $n$. 
(Notation: we write $A \lesssim B$ and $A = O(B)$ to mean that $A \le C B$ for some constant $C > 0$.)

\begin{theorem} \label{thm:lower}
There exist infinitely many Cayley graphs $G$ whose adjacency matrix 
has an eigenspace all of whose eigenfunctions $x \colon G \to \CC$ satisfy $\norm{x}_{L^\infty(G)} \ge c\norm{x}_{L^2(G)} \sqrt{\log n} / \log\log n $, where $n$ is the number of vertices and $c > 0$ is some absolute constant.
\end{theorem}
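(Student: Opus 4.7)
The plan is to find a Cayley graph on a group $G$ of order $n$ whose adjacency matrix has an eigenspace equal to a high-dimensional isotypic component $V_\rho \subset L^2(G)$ for some irreducible representation $\rho\colon G \to U(d)$, and then show that \emph{every} function in $V_\rho$ is $L^\infty$-concentrated relative to its $L^2$-norm.

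\textbf{Setup and reduction.} For a conjugation-invariant symmetric generating set $S \subset G$, Schur's lemma forces the adjacency operator $A$ to act as the scalar $\mu_\rho = \tfrac{1}{d}\sum_{s\in S}\chi_\rho(s)$ on $V_\rho$. Choose $S$ so that the scalars $\mu_{\rho'}$ are distinct across irreps $\rho'$; then the $\mu_\rho$-eigenspace of $A$ in $L^2(G)$ is exactly $V_\rho$. Schur orthogonality provides an $L^2$-isometry $X \mapsto f_X$ from $(M_d(\CC), \|\cdot\|_{HS})$ onto $V_\rho$ via $f_X(g) := \sqrt{d}\,\Tr(X\rho(g))$, under which $\|f_X\|_{L^2} = \|X\|_{HS}$ and $\|f_X\|_{L^\infty} = \sqrt{d}\max_{g\in G}|\Tr(X\rho(g))|$. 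Hence the theorem reduces to the pointwise bound
\[
\max_{g \in G}|\Tr(X\rho(g))| \ge c\,\|X\|_{HS}\,\sqrt{\log n}/(\sqrt{d}\,\log\log n) \qquad \text{for every } X \in M_d(\CC),
\]
i.e.\ to showing that the normalized orbit $\{\rho(g)/\sqrt{d}\}_{g \in G}$ is a near-optimal $\epsilon$-net (in the inner-product sense) of the Hilbert--Schmidt unit sphere in $M_d(\CC)$ at scale $\epsilon \asymp \sqrt{\log n}/(d\,\log\log n)$.

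\textbf{Construction.} A volumetric bound shows $n$ points can cover the HS unit sphere at scale at most $\asymp \sqrt{\log n}/d$, so the target is only a $\log\log n$ factor from optimal. A natural candidate for $(G,\rho)$ is a quasirandom simple group of Lie type with a large minimal irreducible dimension---for instance $G = \mathrm{PSL}_2(\FF_q)$ of order $n \asymp q^3$ with $\rho$ a principal- or discrete-series representation of dimension $d \asymp q$. Character-sum estimates for such groups (via Deligne/Katz-type bounds or classical Gauss sums) show that $\{\rho(g)\}$ is close to a unitary $2$-design, so the matrix coefficients $g \mapsto \Tr(X\rho(g))$ enjoy strong equidistribution.

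\textbf{Main obstacle.} The technical heart is upgrading average-case spreading (automatic from Schur's identity $\EE_g|\Tr(X\rho(g))|^2 = \|X\|_{HS}^2/d$) to a uniform pointwise lower bound on $\max_g|\Tr(X\rho(g))|$ over \emph{every} $X \in M_d(\CC)$. The plan is to (i) discretize the HS unit sphere of $M_d(\CC)$ by an $\epsilon$-net of cardinality $\exp(O(d^2))$; (ii) establish pointwise concentration of $|\Tr(X\rho(g))|$ over $g$ for each fixed $X$ from the chosen group's pseudorandomness; (iii) union bound over the net and then chain over scales in Dudley fashion. The $\log\log n$ factor should emerge from the chaining, while arranging the isolation of $\mu_\rho$ and delivering sufficiently strong representation-theoretic pseudorandomness is where most of the work will lie.
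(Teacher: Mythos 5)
Your reduction to the statement that the orbit $\{\rho(g)/\sqrt{d}\}_{g\in G}$ is a near-optimal inner-product net of the Hilbert--Schmidt sphere in $M_d(\CC)$ is a genuinely different angle, and it is cleanly set up; but there is a serious gap at the step you flag as the ``main obstacle,'' and the proposed repair does not work.

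The key difficulty is that you need a \emph{lower} bound on $\max_{g}|\Tr(X\rho(g))|$ uniform over the entire $d^2$-dimensional space of $X$, whereas the paper only needs a lower bound over a single $d$-dimensional slice of rank-one matrices. Concretely, the paper uses the sub-representation-supported function $f(g) = \tfrac12 - \tfrac12\mbf{a}^\intercal\rho(g)\mbf{a}$ with a \emph{specific} $\mbf{a} \propto (1,1/\sqrt{2},\dots,1/\sqrt{d})$. Because $\wh f(\rho) = -\mbf{a}\mbf{a}^\intercal/(2d)$ is rank one, the relevant eigenspace is $V_{\rho,\mbf{a}}$ and one only has to control $\sup_g|\langle\mbf{v},\rho(g)\mbf{a}\rangle|$ for that single $\mbf{a}$; this is done by an elementary rearrangement argument (\cref{lem:sup-inequality}). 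You, by contrast, insist on a conjugation-invariant $S$, which forces the eigenspace to be all of $V_\rho$ and forces a bound over all $X$. That is a vastly harder assertion: it is precisely the statement that the orbit $\{\rho(g)/\sqrt{d}\}$ has width close to the volumetric bound on the $(2d^2-1)$-sphere, which is the $M_d(\CC)$-analogue of the question Green~\cite{Gre20} answered for vectors using the classification of finite simple groups. There is no elementary route known to this, and the paper deliberately navigates around needing it.

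The proposed proof of the pointwise bound also does not close the gap. You write that you want ``pointwise concentration of $|\Tr(X\rho(g))|$ over $g$'' and then a union bound and Dudley chaining. But concentration of $\Tr(X\rho(g))$ gives an \emph{upper} bound on its tails, which, if anything, makes $\max_g|\Tr(X\rho(g))|$ smaller. What you actually need is anti-concentration (or Gaussian-like \emph{lower} bounds on high moments $\EE_g|\Tr(X\rho(g))|^{2k}$), and this does not follow from equidistribution or quasirandomness of $G$. The abelian case is instructive: there $\Tr(X\rho(g))$ is a character evaluated at $g$, the second moment is $\|X\|_{\HS}^2/d = \|X\|_{\HS}^2$, and the max is exactly $\|X\|_{\HS}$ --- no $\sqrt{\log n}$ gain, despite perfect equidistribution. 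Moreover, Dudley chaining is a tool for upper bounds on suprema of random processes; here there is no randomness at all, and the net only reduces the uniform statement to a finite set of $X$'s for each of which the bound must still be proved by other means. Finally, you also do not carry out the remaining bookkeeping the paper must do: you assume a conjugation-invariant generating set $S$ with $\mu_\rho$ isolated from all other $\mu_{\rho'}$ exists without constructing one, whereas the paper handles the analogous issue by Bernoulli-sampling an unweighted graph from $f$ and invoking matrix Hoeffding plus Davis--Kahan to show the eigenspace survives the rounding. In short: the reduction is correct and interesting, but the heart of the argument (the pointwise lower bound over all $X$) is missing and the suggested concentration/chaining strategy is not the right tool for a lower bound.
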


The next result gives a nearly matching upper bound of $C(n) \lesssim \sqrt{\log n}$ for \cref{qn:main}.

\begin{theorem}\label{thm:upper-cayley}
Every Cayley graph has an orthonormal $C\sqrt{\log n}$-bounded eigenbasis, where $n$ is the number of vertices and $C$ is some absolute constant.
\end{theorem}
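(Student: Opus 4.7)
The plan is to use the Peter--Weyl decomposition of $L^2(G)$ under the right-regular action of $G$, together with concentration of measure on the unit sphere, to assemble a bounded orthonormal eigenbasis one adjacency eigenspace at a time.

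Decompose $L^2(G) \cong \bigoplus_\rho \on{End}(V_\rho)$, summing over irreducible unitary representations $\rho$ of $G$ with $\dim V_\rho = d_\rho$, via the isometry $T \mapsto f_T$ where $f_T(g) := \sqrt{d_\rho}\,\Tr(T^* \rho(g))$. The adjacency operator $A$ of the Cayley graph commutes with the right-regular representation and therefore preserves this decomposition; since $S = S^{-1}$, a direct calculation shows that $A$ acts on each summand $\on{End}(V_\rho)$ by left multiplication by the Hermitian operator $M_\rho := \sum_{s \in S} \rho(s)$. Consequently, the $\lambda$-eigenspace of $A$ inside $\on{End}(V_\rho)$ is exactly $E_\lambda(M_\rho) \otimes V_\rho^* \subseteq V_\rho \otimes V_\rho^*$, of dimension $m_\lambda d_\rho$, where $m_\lambda := \dim E_\lambda(M_\rho)$.

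Fix such an eigenspace $W := E_\lambda(M_\rho) \otimes V_\rho^*$ and pick an orthonormal basis of $W$ as the columns of a Haar-random unitary on $W \cong \CC^{m_\lambda d_\rho}$. For $T \in W$ with $\snorm{T}_{\HS} = 1$ and any $g \in G$, writing $P_\lambda$ for the orthogonal projection $V_\rho \to E_\lambda(M_\rho)$ and using $P_\lambda T = T$, we get
\[
f_T(g) = \sqrt{d_\rho}\,\ang{T, \rho(g)}_{\HS} = \sqrt{d_\rho}\,\ang{T, P_\lambda \rho(g)}_{\HS},
\]
with $\snorm{P_\lambda \rho(g)}_{\HS}^2 = \Tr(P_\lambda) = m_\lambda$. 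Standard spherical concentration in $\CC^N$ with $N = m_\lambda d_\rho \ge 2$ then gives
\[
\Pr\!\left(\abs{\ang{T, P_\lambda \rho(g)}_{\HS}} > t\right) \le 2\exp\!\left(-c\, d_\rho\, t^2\right),
\]
so taking $t = C\sqrt{(\log n)/d_\rho}$ with $C$ sufficiently large makes this probability at most $n^{-4}$. A union bound over $g \in G$ (at most $n$ values) and over the at most $n$ basis vectors arising across the whole decomposition then produces, simultaneously for all $(\rho,\lambda)$, orthonormal bases with $\abs{f_T(g)} \le \sqrt{d_\rho}\cdot t = C\sqrt{\log n}$ for every $g$ and every basis vector $T$.

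The degenerate regimes (for instance $d_\rho = 1$, or any eigenspace of dimension below $\log n$) are handled by the crude Cauchy--Schwarz estimate $\abs{f_T(g)} \le \sqrt{d_\rho m_\lambda}$, which is automatically $\le \sqrt{\log n}$ there. Concatenating the chosen orthonormal bases across all eigenspaces yields the desired $C\sqrt{\log n}$-bounded orthonormal eigenbasis of $A$. The main technical delicacy is calibrating the random rotation so that the concentration actually gains the factor $1/\sqrt{d_\rho}$ that cancels the Peter--Weyl weight $\sqrt{d_\rho}$: the target vector $P_\lambda \rho(g)$ has Hilbert--Schmidt norm $\sqrt{m_\lambda}$, not $1$, and it is the cancellation of this $\sqrt{m_\lambda}$ against the ambient dimension $m_\lambda d_\rho$ that produces the necessary gain uniformly across the full range of $(d_\rho, m_\lambda)$.
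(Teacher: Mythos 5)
Your proof takes essentially the same route as the paper's: decompose $L^2(G)$ via Peter--Weyl, identify the eigenspaces of the adjacency operator inside each $\on{End}(V_\rho)$, and pick a Haar-random orthonormal basis of each eigenspace, controlling the $L^\infty$ norm via concentration of measure on the sphere together with a union bound over the $\le n$ group elements and the $\le n$ basis vectors. Two cosmetic differences are worth noting. First, you work directly with the full eigenspace $W = E_\lambda(M_\rho)\otimes V_\rho^\ast$ of dimension $m_\lambda d_\rho$, whereas the paper first diagonalizes $M_\rho := \wh{\mbm{1}_S}(\rho)$ and applies the concentration argument separately within each rank-one slice $V_{\rho,\mbf{b}}$ of dimension $d_\rho$. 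Your version is slightly slicker: it saves the step of choosing an eigenbasis of $M_\rho$, and the key cancellation you isolate — the target $P_\lambda\rho(g)$ has $\HS$-norm $\sqrt{m_\lambda}$ against ambient dimension $m_\lambda d_\rho$, so the exponent is $\asymp d_\rho t^2$ regardless of $m_\lambda$ — is correct. Second, you use the spherical-cap concentration directly rather than factoring it through a standalone lemma; this is the same estimate (the paper's Lemma 4.1 uses exactly the spherical-cap bound $\PP(\sang{\mbf{w},\mbf{v}}\ge\epsilon)\le e^{-d\epsilon^2/2}$).

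There is, however, one genuine gap. The theorem asserts an \emph{orthonormal} eigenbasis, and the paper is careful to distinguish this (real) notion from a \emph{unitary} (complex) one. Your Haar-random columns of a unitary on $W\cong\CC^{m_\lambda d_\rho}$ produce a bounded unitary eigenbasis of $\CC^G$, not a real one. The paper's proof spends the majority of its length precisely on this point: it pairs each non-self-dual $\rho$ with its conjugate $\ol\rho$ and takes real and imaginary parts $y_{\mbf{v}}^0, y_{\mbf{v}}^1$, and for self-dual $\rho$ it invokes Schur's lemma to produce the intertwiner $Q$ with $\rho Q = Q\ol\rho$ and $Q\ol Q = \omega I$, $\omega\in\{\pm 1\}$, splitting into an orthogonal ($\omega = 1$) and a symplectic ($\omega = -1$) case with separate constructions. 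Your argument does not touch this. The paper explicitly remarks that "this technicality is unnecessary if we only wish to find a unitary eigenbasis," so your core bound is sound, but as written your proposal establishes the unitary version of \cref{thm:upper-cayley}, not the stated real orthonormal version. To complete the proof you would need to incorporate a realification step along the lines of the paper's, being careful that in the $\omega = 1$ case the random basis of each $V_{\rho,\mbf{b}}$ can be chosen real (the paper does this by applying its projection lemma inside the real form $\re V_{\rho,\mbf{b}}$).
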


More generally, the same upper bound holds for vertex-transitive graphs.

\begin{theorem}\label{thm:upper-transitive}
Every vertex-transitive graph has an orthonormal $C\sqrt{\log n}$-bounded eigenbasis, where $n$ is the number of vertices and $C$ is some absolute constant.
\end{theorem}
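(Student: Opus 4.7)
The plan is to reduce the theorem to a linear-algebraic statement and apply it to each eigenspace of the adjacency matrix: if $V \subseteq \CC^n$ is a subspace of dimension $d$ such that the orthogonal projection $P_V$ onto $V$ has constant diagonal equal to $d/n$, then $V$ admits an orthonormal basis (in the Euclidean sense) whose entries are all bounded in absolute value by $C\sqrt{\log n / n}$. Under the paper's averaging $L^2$ normalization on the vertex set this becomes an $L^\infty$ bound of $C\sqrt{\log n}$ per basis vector, and concatenating such bases across all eigenspaces produces the desired eigenbasis.

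To verify the hypothesis, I would let $\Gamma$ be a transitive subgroup of the automorphism group of the graph. Every $\sigma \in \Gamma$ commutes with the adjacency matrix $A$ and hence with every spectral projection $P_V$, so $(P_V)_{\sigma(v),\sigma(v)} = (P_V)_{v,v}$ for all vertices $v$. By transitivity the diagonal of $P_V$ is constant, and since $\Tr P_V = d$ each diagonal entry equals $d/n$. Equivalently, whenever $U$ is an $n \times d$ matrix whose columns form an orthonormal basis of $V$, every row of $U$ has Euclidean norm exactly $\sqrt{d/n}$, which in particular forces the pointwise bound $\abs{U_{v,i}}^2 \le d/n$.

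For the linear-algebraic step, in the easy regime $d \le C\log n$ the bound $\abs{U_{v,i}}^2 \le d/n \le C\log n/n$ is already satisfied by any orthonormal basis, so I take one arbitrarily. In the regime $d > C\log n$ I randomize: pick a Haar-random unitary $R$ on $\CC^d$ and set $U' = UR$, which still has orthonormal columns spanning $V$. Each entry $(U')_{v,i} = \sang{u_v, r_i}$ pairs a fixed vector $u_v$ of Euclidean norm $\sqrt{d/n}$ against the $i$th column $r_i$ of $R$, which is uniform on the unit sphere of $\CC^d$; by rotational invariance $\abs{(U')_{v,i}}^2$ has the same distribution as $(d/n)\abs{z_1}^2$ for $z$ uniform on the sphere. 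The standard beta tail bound $\PP(\abs{z_1}^2 > s) = (1-s)^{d-1}$ yields $\PP(\abs{(U')_{v,i}}^2 > C\log n / n) \le n^{-C/2}$ for $C$ a sufficiently large absolute constant, and a union bound over the at most $nd \le n^2$ entries of $U'$ shows that with positive probability every entry is bounded by $\sqrt{C\log n/n}$.

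I do not anticipate real obstacles. The only nontrivial ingredients are the constant-diagonal observation, which is the sole place where vertex-transitivity enters, and the one-line tail estimate for the modulus of a single coordinate of a uniformly random unit vector in $\CC^d$. The same argument specializes to Cayley graphs and thereby also recovers \cref{thm:upper-cayley}.
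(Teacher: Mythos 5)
Your proposal is correct, and it takes a genuinely different and more elementary route than the paper's. The paper lifts the vertex-transitive graph to a Cayley-graph structure on the automorphism group $G$, identifies vertices with cosets $H\backslash G$, and then runs nonabelian Fourier analysis: it decomposes $L^2(H\backslash G)$ using the $\rho|_H$-invariant subspaces $U_\rho$ of each irreducible representation, builds eigenfunctions of the form $x^\rho_{j,k}(g)=\sqrt{d_\rho}\,\ang{\rho(g)a_j^\rho, v_k^\rho}$, and applies a random-rotation lemma (its \cref{lem:projection}) inside each $W_\rho$ to control $\norm{x^\rho_{j,k}}_{L^\infty}$. Your argument bypasses all of this representation theory with the single observation that since automorphisms commute with the adjacency matrix, and hence with every spectral projection $P_V$, vertex-transitivity forces the diagonal of $P_V$ to be constant and equal to $d/n$; equivalently, every orthonormal-column matrix $U$ for $V$ has all rows of Euclidean norm $\sqrt{d/n}$. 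A Haar-random rotation of $U$ together with the spherical-cap tail estimate (the same probabilistic ingredient as the paper's \cref{lem:projection}, just applied directly inside each eigenspace rather than inside each $W_\rho$) then finishes. Your approach is cleaner, applies to each eigenspace wholesale rather than refining by representation, and in fact generalizes beyond graphs: it works for any symmetric matrix whose spectral projectors have constant diagonal. What the paper's Fourier-analytic route buys is finer structural control over the eigenspaces (useful elsewhere in the paper), and in particular a mechanism --- pairing conjugate representations --- to produce a \emph{real}-valued orthonormal eigenbasis. You should note this last point: the theorem (as stated and as used for spectral drawings) wants a real orthonormal basis, and your write-up works over $\CC$ with a random unitary. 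This is not a real obstacle --- the eigenspaces of a real symmetric matrix are real, one simply replaces the Haar-random unitary by a Haar-random orthogonal matrix on $\RR^d$ and uses the real spherical-cap bound $\PP(\ang{\mbf{w},\mbf{v}}\ge\epsilon)\le e^{-d\epsilon^2/2}$ (the paper's \cref{eq:spherical-cap}) --- but it is worth stating explicitly.
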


It remains an intriguing open problem to close the gap between the upper and lower bounds.
This problem appears to be related to a recent deep and difficult result of Green~\cite{Gre20}, who showed that the maximum possible width of a finite transitive subset of the unit sphere in $\RR^d$ is on the order of $1/\sqrt{\log d}$ (in sharp constrast to infinite subsets, e.g., the entire sphere has width 1). Green's theorem answers a question of the third author, which was in turn prompted by \cite{CZ17} and this work.
Green's proof relies on the classification of finite simple groups.

Let us mention a few directions worth further investigation.
First, our construction proving \cref{thm:lower} uses graphs of increasing degree. Can one also find bounded degree Cayley graphs without a bounded eigenbasis?

\begin{conjecture}
There exists some $d$ such that for every $C$ there exists a $d$-regular Cayley graph without an orthonormal $C$-bounded eigenbasis.
\end{conjecture}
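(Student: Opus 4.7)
I would attempt the conjecture using Cayley graphs of $G_p = \mathrm{PSL}_2(\FF_p)$ with a fixed bounded symmetric generator $S$---for instance the Lubotzky--Phillips--Sarnak generators---which yield $|S|$-regular graphs on $n = |G_p| \sim p^3$ vertices. These groups have irreducible representations of dimension up to $d_\rho \sim p \sim n^{1/3}$, so one can hope that a single isotypic eigenspace of the adjacency matrix is already large and rigid enough to obstruct a bounded basis, even though $|S|$ is fixed as $p \to \infty$.

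Fix such a representation $\rho \colon G_p \to U(d_\rho)$ with $d_\rho \to \infty$, and pick a simple eigenvalue $\lambda$ of $\rho(S) = \sum_{s \in S} \rho(s)$ with unit eigenvector $v$. Using the Peter--Weyl-type decomposition $L^2(G_p) = \bigoplus_\sigma V_\sigma \otimes V_\sigma^*$, under which the adjacency matrix acts as $\sigma(S) \otimes I$ on each block, and assuming generic non-coincidence of $\lambda$ with eigenvalues in other isotypic blocks, the $A$-eigenspace for $\lambda$ is exactly the $d_\rho$-dimensional subspace $\{f_w : w \in V_\rho\}$, where $f_w(g) = \sqrt{d_\rho}\,\sang{w, \rho(g) v}$. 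A $C$-bounded orthonormal eigenbasis then amounts to an orthonormal basis $\{w_i\}$ of $V_\rho$ satisfying $\max_{g \in G_p} |\sang{w_i, \rho(g) v}| \le C/\sqrt{d_\rho}$ for every $i$, i.e.\ the orbit $\{\rho(g) v\}$ admits an orthonormal frame on which it sits uniformly flat.

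The crux is to rule out such a frame for some $\rho$ and $v$ as $d_\rho \to \infty$. Simple orbit-averaging yields $\sum_g |\sang{w_i, \rho(g) v}|^2 = |G_p|/d_\rho$ and is insufficient by itself. A promising strategy is to exploit structure of the orbit: decompose $V_\rho$ under the diagonal torus $H \le G_p$, into which $\rho|_H$ splits as many distinct characters; express both $v$ and each $w_i$ in the corresponding $H$-eigenbasis, and iterate the $H$-action (together with Weyl-element swaps) to extract rigid combinatorial constraints on the magnitudes $|\sang{w_i, \rho(g) v}|$. Explicit matrix-coefficient formulas for the principal and discrete series of $\mathrm{PSL}_2(\FF_p)$, combined with Bourgain--Gamburd-type product growth or equidistribution estimates, are natural sources of quantitative input. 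Green's theorem on the width of transitive subsets of the sphere supplies the dual picture---it produces one flat direction---and a proof of the conjecture presumably requires upgrading ``one flat direction'' to ``no orthonormal basis of flat directions.''

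The main obstacle will be precisely that uniformity over \emph{all} orthonormal bases, not merely a generic or random one: basis rotations give substantial freedom to cancel pointwise excesses, and averaging arguments only produce a single direction with large inner product. A complementary probabilistic route is to take $S$ itself random (of fixed size) in $G_p$ and control the distribution of eigenvectors of $\rho(S)$ by representation-theoretic concentration, perhaps reducing the problem to showing that a Haar-random orthonormal basis of $V_\rho$ is bad with overwhelming probability. In either approach, closing the gap from ``some direction is bad'' to ``every basis is bad'' is where genuinely new ideas appear necessary, and is likely the reason the conjecture remains open.
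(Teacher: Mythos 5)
This statement is a \emph{conjecture} in the paper, posed as an open problem; the paper provides no proof of it, so there is nothing to compare your argument against. What you have written is, as you yourself acknowledge in the final paragraph, a research plan rather than a proof, and the central step is left open. Let me be concrete about where the gap is and flag one additional issue in the setup.

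Your reduction of the problem is correct in spirit and mirrors the paper's eigenspace analysis (\cref{subsec:eigendecomposition}): if the $\lambda$-eigenspace of the adjacency matrix is $V_{\rho,\mathbf{v}}$ for a fixed $\rho\in\wh{G}$ and unit eigenvector $\mathbf{v}$ of $\wh{\mbm{1}_S}(\rho)$, then a $C$-bounded orthonormal eigenbasis of this eigenspace is equivalent to an orthonormal frame $\{\mathbf{w}_i\}$ of $W_\rho$ with $\sup_{g}\abs{\sang{\mathbf{w}_i,\rho(g)\mathbf{v}}}\le C/\sqrt{d_\rho}$ for every $i$. Disproving the existence of such a frame is exactly what is needed. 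But none of the tools you invoke actually bear on that uniformity over all bases: the orbit-averaging identity $\sum_g\abs{\sang{\mathbf{w},\rho(g)\mathbf{v}}}^2=\abs{G}/d_\rho$ holds for \emph{every} unit $\mathbf{w}$ and therefore cannot distinguish a bad frame from a good one; Green's theorem \cite{Gre20} is an \emph{upper} bound on the minimum width of a transitive orbit (it shows one direction is flat, to scale $1/\sqrt{\log d_\rho}$, which is in fact nowhere near the flatness $C/\sqrt{d_\rho}$ a bounded basis would require) and says nothing about whether $d_\rho$ pairwise-orthogonal flat directions can coexist; and the remarks about torus decompositions, matrix coefficients of principal/discrete series, and Bourgain--Gamburd expansion are suggestions rather than arguments. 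The claim ``closing the gap from `some direction is bad' to `every basis is bad' is where genuinely new ideas appear necessary'' is accurate, but it also means no proof has been given.

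Two further technical points in the setup deserve caution. First, you assume the eigenvalue $\lambda$ of $\wh{\mbm{1}_S}(\rho)$ is simple and does not collide with eigenvalues from other irreducibles $\sigma$. For a \emph{fixed} generating set $S$ (as the LPS generators are), there is no genericity to appeal to, and for a random bounded $S$ the collision probabilities need to be controlled; if $\lambda$ appears with multiplicity $m>1$ inside $\rho$, or also appears in $\sigma\ne\rho$, the eigenspace is $V_{\rho,\mathbf{v}_1}\oplus\cdots\oplus V_{\rho,\mathbf{v}_m}\oplus\cdots$, which is larger and gives strictly more freedom to construct a bounded basis, weakening whatever obstruction you hope to extract. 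Second, the paper's construction for \cref{thm:lower} uses an orbit whose vector $\mathbf{a}=(1,1/\sqrt{2},\ldots,1/\sqrt{d})/\abs{\cdot}$ is engineered to be ``spiky''; with a bounded generating set you have no direct control over the eigenvector $\mathbf{v}$ of $\wh{\mbm{1}_S}(\rho)$, so you would need a separate argument that $\mathbf{v}$ and its orbit are unfavorable, which is a nontrivial delocalization/localization question for eigenvectors of bounded sums of unitaries. In short, your choice of candidate family is reasonable, but the proposal does not contain the missing idea the conjecture requires and the paper does not supply one either.
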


Another direction worth exploring further is to understand what families of groups always have bounded eigenbasis. 
Extending the example of abelian groups, it is not hard to show using nonabelian Fourier analysis that in a group where every irreducible representation has dimension at most $d$, every Cayley graph has a $\sqrt{d}$-bounded unitary eigenbasis. Given these examples, a natural question is if for more natural classes of ``nearly abelian'' groups, every Cayley graph has a bounded eigenbasis.
\begin{question}\label{ques:nilpotent-affine}
Do Cayley graphs on nilpotent groups of bounded step always have bounded eigenbasis? 
What about affine groups?
\end{question}

The general problem of characterizing groups with the bounded eigenbasis property is somewhat reminiscent of the characterization of approximate groups by Breuillard, Green, and Tao~\cite{BGT12}, which unifies classic theorems of Freiman on sets of bounded doubling \cite{Fre73} and Gromov on groups of polynomial growth \cite{Gro91}.

Let us mention that another instance where a bounded eigenbasis came in handy was in studying the relationship between discrepancy and eigenvalues of Cayley graphs. 
Kohayakawa, R\"odl, and Schacht~\cite{KRS16} showed that for abelian Cayley graphs, having small discrepancy is equivalent to having small second eigenvalues, with a spectral proof suggested by Gowers. 
The proof relies on the bounded eigenbasis of abelian Cayley graphs.
The abelian hypothesis was later removed by Conlon and Zhao~\cite{CZ17} via an application of Grothendieck's inequality.

The boundedness of eigenfunctions has an appealing interpretation for \emph{spectral graph drawings}. Hall's spectral drawing of a graph~\cite{Hal70} (also see Spielman's survey \cite{Spi12}, which contains some nice figures) places each vertex $v$ at $(x(v), y(v)) \in \RR^2$, where $x$ and $y$ are eigenfunctions corresponding to the second and third eigenvalues of the graph Laplacian (here $x$ and $y$ are assumed orthogonal and properly scaled). This drawing has the property that it minimizes the sum of squared edge-lengths among all drawings of the graph in $\RR^2$ with the vertices in isotropic position (so that $x$ and $y$ coordinates each have variance 1 and are uncorrelated). Every abelian Cayley graph has a spectral drawing where all the coordinates are bounded.
On the other hand, \cref{thm:lower} gives us an example of a Cayley graph where no spectral drawing can fit inside a disk of radius $c \sqrt{\log n}/\log\log n$ (provided that the eigenspace in the theorem corresponds to the second and third eigenvalues, which can be achieved; see the end of \cref{sec:construction} for further details). Some examples of spectral drawings of Cayley graphs used in the proof of \cref{thm:lower} are shown in \cref{fig:spectral-drawing}. 

\begin{figure}[t]
\centering
\subfloat[$G = S_3 \ltimes (\ZZ/2\ZZ)^3$]{{\includegraphics[width=7cm]{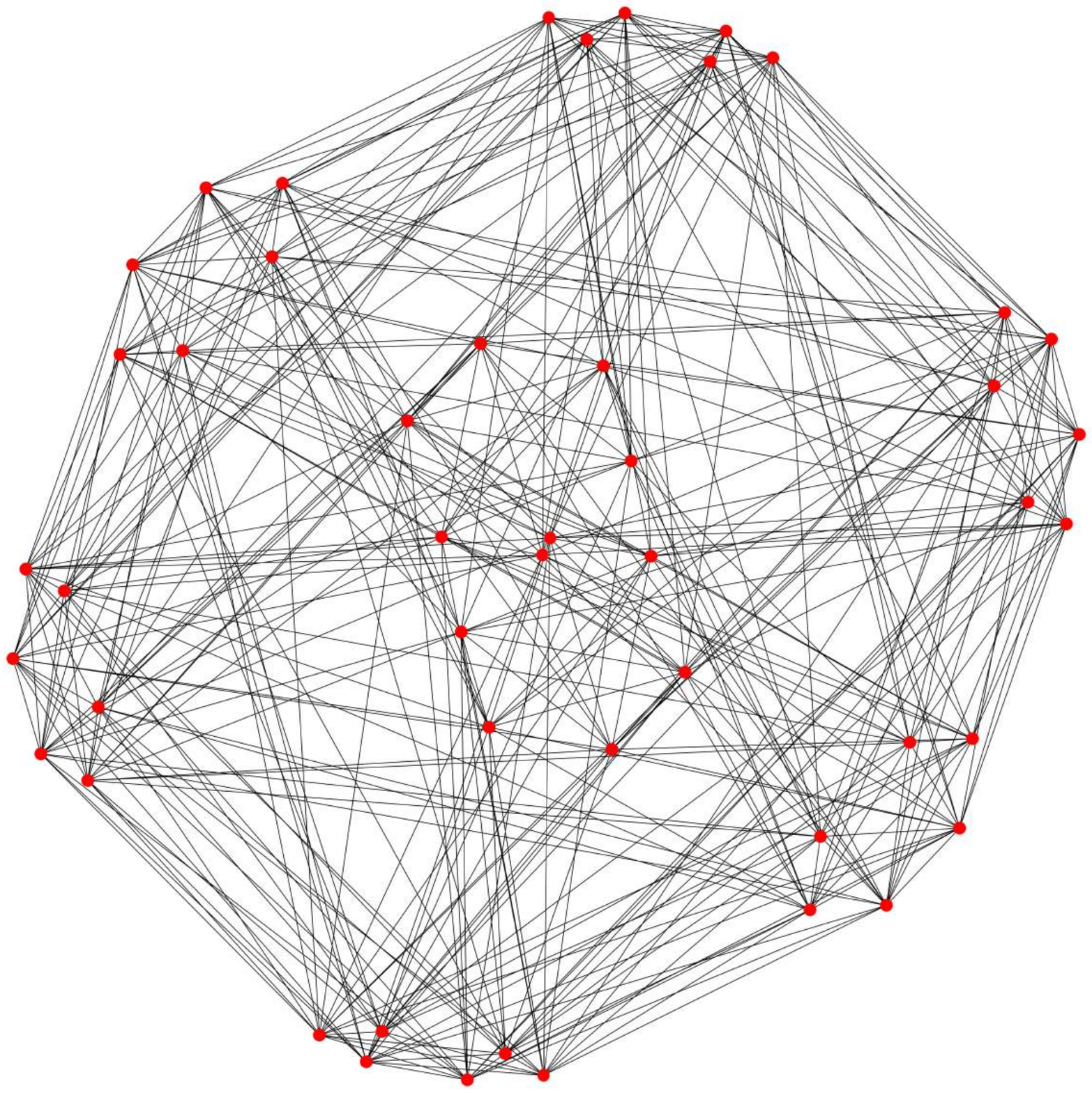}}}
\qquad
\subfloat[$G = S_4 \ltimes (\ZZ/2\ZZ)^4$]{{\includegraphics[width=7cm]{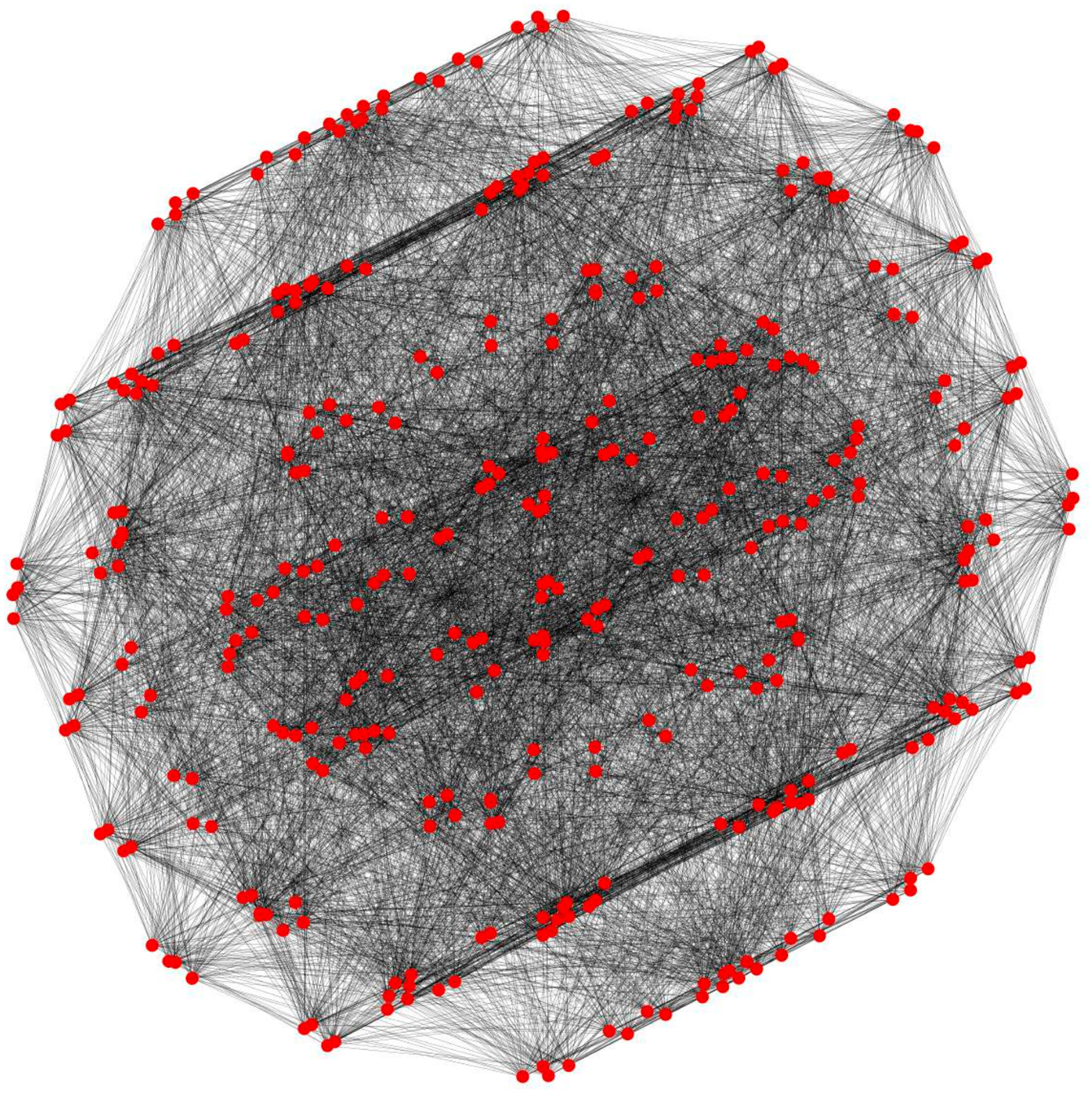}}}

\caption{Some spectral drawings of Cayley graphs used in the proof of \cref{thm:lower}, corresponding to the group $G = S_d \ltimes (\ZZ/2\ZZ)^d$, drawn for $d \in \{3,4\}$. Such a spectral drawing requires a canvas of side-lengths on the order of $\sqrt{d/\log d}$, where vertices have uncorrelated $x$ and $y$ coordinates each having variance 1. See the remark at the end of \cref{subsec:unweighted} on how these figures were generated.} \label{fig:spectral-drawing}
\end{figure}

\subsection{Random Cayley graphs are small-set expanders}

A classic result due to Alon and Roichman~\cite{AR94} shows that in a random Cayley graph of a group $G$ generated by $k > C\epsilon^{-2} \log \abs{G}$ independent and uniform random group elements, all eigenvalues other than the top one has absolute value at most $k\epsilon$. In particular, via the expander mixing lemma, it implies that for every $\emptyset \neq X \subsetneq G$,
\[
\abs{ \frac{e(X, G \setminus X) }{\frac{2k}{\abs{G}} \abs{X}\abs{G\setminus X}} - 1} \le \epsilon,
\]
where $e(A,B)$ counts the number of edges with one endpoint in $A$ and the other in $B$.

Naor~\cite{Nao12} developed a new Banach space-valued Azuma inequality and proved more refined small-set isoperimetry inequalities in random Cayley graphs of abelian groups, and he asked whether his result can be extended to all groups. Here we answer his question affirmatively. 
In the following theorem, by ``the Cayley (multi)graph associated $k$ independent uniformly chosen random group elements'' we mean the following: select random $g_1, \dots, g_k$ and take the Cayley graph generated by $g_1, g_1^{-1}, \dots, g_k, g_k^{-1}$, taken with multiplicity. Allowing multiplicities makes the result a bit easier to state, and is a technicality that one should feel free to ignore (in many parameter ranges multiplicities are unlikely to occur).

\begin{theorem} \label{thm:naor-nonabelian}
There exists a universal constant $C > 0$ such that for every positive integer $k$ and every group $G$, with probability at least $1/2$, the Cayley (multi)graph associated to $k$ independent uniformly chosen random group elements has the property that for every $X \subseteq G$ with $1 < \abs{X} \le \abs{G}/2$, the number of edges $e(X, G\setminus X)$ between $X$ and $G \setminus X$ satisfies
\[
\abs{ \frac{e(X, G \setminus X) }{\frac{2k}{\abs{G}} \abs{X}\abs{G\setminus X}} - 1} \le C \sqrt{\frac{\log \abs{X}}{k}}.
\]
\end{theorem}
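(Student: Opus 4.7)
The plan is to analyze $D_X := e(X, G \setminus X) - \EE\,e(X, G\setminus X)$ directly as a scalar random variable — a sum of $k$ i.i.d.\ bounded terms — without ever invoking an eigenbasis. Writing $m = \abs{X}$ and noting $e(X, G\setminus X) = 2km - \sum_i F_i(X)$ with $F_i(X) := \abs{X \cap g_iX} + \abs{X \cap g_i^{-1}X} \in [0, 2m]$, we obtain $D_X = -\sum_{i=1}^k (F_i(X) - \EE\,F_i(X))$. Each summand has mean $2m^2/\abs{G}$ and variance at most $4m^3/\abs{G}$, the latter coming from the elementary
\[
\sum_{g \in G}\abs{X \cap gX}^2 \;\le\; m \cdot \sum_{g \in G}\abs{X \cap gX} \;=\; m^3.
\]
This variance estimate is purely combinatorial — no representation theory is used — so it applies verbatim to every group, abelian or not.

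The analysis splits into two regimes according to the size of $m$ relative to $\sqrt{\abs{G}}$. When $m \ge \sqrt{\abs{G}}$, matrix Bernstein applied to $A - \EE A$ (a sum of $k$ i.i.d.\ centered Hermitian matrices of operator norm $O(1)$) yields $\snorm{A - \EE A}_{\mathrm{op}} \lesssim \sqrt{k\log \abs{G}}$ with probability $1-o(1)$; the expander mixing lemma applied to $f_X = \mathbf 1_X - (m/\abs{G})\mathbf 1$ then gives $\abs{D_X} \lesssim (m(\abs{G}-m)/\abs{G})\sqrt{k\log \abs{G}}$, which matches the target since $\log \abs{G} \le 2\log m$ in this range. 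When $2 \le m < \sqrt{\abs{G}}$, we instead fix $X$ and apply Bennett's inequality with the variance and range bounds above: for $t = Cm\sqrt{k\log m}$ with $C$ suitably large, the resulting Poisson-type tail is at most $\exp(-c\,m\log(\abs{G}/m))$, which beats $\binom{\abs{G}}{m}^{-1}$ by a comfortable margin. A union bound over the $\binom{\abs{G}}{m}$ subsets of size $m$, and then over $m \in \{2,\dots,\lceil\sqrt{\abs{G}}\rceil\}$, keeps the total failure probability below a small constant. Combining the two regimes yields the theorem with probability at least $1/2$.

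The main technical subtlety is the small-$\abs{X}$ case, where the variance bound $\mathrm{Var}(F_i) \lesssim m^3/\abs{G}$ is tight (achieved, e.g., when $X$ is a subgroup), so the sub-Gaussian tail of Bernstein alone is too weak to beat the $\binom{\abs{G}}{m}$ union bound. It is the sharper Poisson refinement (Bennett's form) that produces the crucial logarithmic factor $\log(\abs{G}/m)$ in the exponent, precisely of the right order to dominate $\log\binom{\abs{G}}{m}/m$. In this way, the single combinatorial identity $\sum_g \abs{X \cap gX}^2 \le m^3$ plays the role for general groups that $1$-boundedness of the Fourier eigenbasis plays in Naor's abelian proof; this lets us sidestep the bounded-eigenbasis hypothesis that, by \cref{thm:lower}, is unavailable in general.
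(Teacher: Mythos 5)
Your route is genuinely different from the paper's. The paper proves a single concentration event (Naor's Schatten--norm bound, \cref{lem:schatten-concentration}) and then \emph{deterministically} controls $\ang{x, f\ast x}$ for every test function at once via a noncommutative H\"older and Hausdorff--Young argument (\cref{lem:qform-schatten,lem:hy}); it never pays for a union bound over subsets. Your large-$\abs{X}$ branch is fine: matrix Bernstein gives $\snorm{A-\EE A}_{\on{op}}\lesssim\sqrt{k\log\abs{G}}$, and since $D_X=-f_X^\intercal(A-\EE A)f_X$ with $\snorm{f_X}_2^2=m(\abs{G}-m)/\abs{G}$, you get $\abs{D_X}\lesssim\sqrt{k\log\abs{G}}\cdot m(\abs{G}-m)/\abs{G}$, which matches the target when $\log\abs{G}\le 2\log m$. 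The gap is in the small-$\abs{X}$ branch, and it is structural, not cosmetic.

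To pay for a union bound over the $\binom{\abs{G}}{m}$ sets you need the per-set failure probability to be at most about $\binom{\abs{G}}{m}^{-1}$, i.e.\ the Bennett exponent must be $\gtrsim \log\binom{\abs{G}}{m}\asymp m\log(e\abs{G}/m)$. You instead ask that it ``dominate $\log\binom{\abs{G}}{m}/m$'', which is off by a factor of $m$, and the actual exponent is indeed short by about that factor. With $t\asymp m\sqrt{k\log m}$, range $M=2m$, and per-term variance $\sigma^2\le 4m^3/\abs{G}$, Bennett's exponent (using $h(u)\asymp u\log u$ for large $u$) is
\[
\frac{k\sigma^2}{M^2}\,h\paren{\frac{Mt}{k\sigma^2}} \;\asymp\; \frac{t}{M}\,\log\frac{Mt}{k\sigma^2} \;\asymp\; \sqrt{k\log m}\,\log\frac{\abs{G}\sqrt{\log m}}{m\sqrt{k}},
\]
which for $k$ near the nontriviality threshold $\Theta(\log m)$ is of order $\log m\cdot\log(\abs{G}/m)$, a factor $\asymp m/\log m$ too small (and the deficit persists up to $k\asymp m^2/\log m$). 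Moreover this is not slack in the estimate: when $X$ is a coset of a subgroup of size $m$ one has $\abs{X\cap g X}\in\{0,m\}$, so $\sum_i F_i = 2m\cdot\on{Bin}(k,m/\abs{G})$, and the upper tail at $t\asymp m\sqrt{k\log m}$ really is of the Poisson size your Bennett computation predicts, nowhere near $\binom{\abs{G}}{m}^{-1}$. Since your per-set bound uses only $\abs{X\cap gX}\le m$ and $\sum_g\abs{X\cap gX}^2\le m^3$, which cannot distinguish a generic $m$-set from a subgroup-like one, no ``fix $X$, then union bound'' argument built on these inputs can close the small-$\abs{X}$ case; a device that controls all $X$ simultaneously through one random quantity, as the Schatten-norm approach does, appears to be essential.
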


Naor proved \cref{thm:naor-nonabelian} for abelian groups. His proof relies on a bounded eigenbasis of abelian Cayley graphs. In \cref{sec:small-set-expansion} we explain how to bypass this obstacle in order to prove the result for nonabelian groups.

\medskip

\noindent \textbf{Acknowledgments.} Zhao thanks Assaf Naor for discussions and for encouraging him to work on this problem. We thank Shengtong Zhang for pointing out some typographical errors.

\section{Preliminaries}\label{sec:background}

\subsection{Nonabelian Fourier transform}\label{subsec:fourier}
We begin by summarizing some standard facts on nonabelian Fourier analysis (e.g., \cite{GW10}). 
Given a finite group $G$, let $\wh{G}$ denote the set of irreducible unitary representations of $G$. 
For each representation $\rho \in \wh G$, call its dimension $d_\rho$, and call the space that it acts on $W_\rho \cong \CC^{d_\rho}$. 
For any $f: G\to\mb{C}$ and $\rho\in\wh{G}$, its Fourier transform evaluated at $\rho$ is given by
\[
\wh{f}(\rho) = \mb{E}_{g\in G}f(g)\rho(g),
\]
which is an endomorphism of $W_\rho$ (i.e., $\wh{f}(\rho)\in\on{End}W_\rho$).
There is an inversion formula, namely
\[f(g) = \sum_{\rho\in\wh{G}}d_\rho\langle\rho(g),\wh{f}(\rho)\rangle_\HS\]
where $\langle A,B\rangle_\HS = \on{Tr}(A^\dagger B)$ is the Hilbert–Schmidt inner product, 
which is just the entry-wise Hermitian product of matrices.
The Hilbert--Schmidt norm is written as $\|A\|_\HS = \sqrt{\on{Tr}(A^\dagger A)}$.
We have Parseval's identity
\[\langle f_1,f_2\rangle_{L^2(G)} = \mb{E}_{g\in G}\ol{f_1(g)}f_2(g) = \sum_{\rho\in\wh{G}}d_\rho\langle\wh{f_1}(\rho),\wh{f_2}(\rho)\rangle_\HS,\]
and in particular,
\[\mb{E}_{g\in G}|f(g)|^2 = \sum_{\rho\in\wh{G}}d_\rho\|\wh{f}(\rho)\|_\HS^2.\]
Finally, we define a convolution of two functions $f_1,f_2: G\to\mb{C}$ via
\[(f_1\ast f_2)(g) = \mb{E}_{h\in G}f_1(gh^{-1})f_2(h).\]
The Fourier transform turns convolution into matrix multiplication:
\[\wh{f_1\ast f_2}(\rho) = \wh{f_1}(\rho)\wh{f_2}(\rho)\]
for all $\rho\in\wh{G}$.

\subsection{Eigendecomposition}\label{subsec:eigendecomposition}
Given a function $f: G\to\mb{C}$, we consider the operator $M_f$ acting on $\CC^G$, the space of functions $G \to \CC$, via
\[
M_f x = f \ast x,
\]
i.e., $(M_f x)(g) = \EE_{h \in G} f(gh^{-1}) x(h)$ for all $x \colon G \to \CC$ and $g \in G$. 
Equivalently, one can also view $M_f$ as a matrix with rows and columns indexed by $G$, whose entry in position $(g,h) \in G \times G$ is $f(gh^{-1})/\abs{G}$.
Then, viewing $x \in \CC^G$ as a vector, the matrix product $M_f x$ agrees with the definition above. 
The matrix can be thought of as the adjacency matrix (after suitable normalization) of a Cayley graph.
Let us explain how to analyze the eigendata of $M_f$ using the Fourier transform.

Assume from now on that $f(g^{-1}) = \ol{f(g)}$ for every $g\in G$. Then
\[\wh{f}(\rho) = \mb{E}_{g\in G}f(g)\rho(g)\]
is Hermitian. 
For any $x: G\to\mb{C}$, applying the Fourier transform, we see that 
$x$ is an eigenfunction of $M_f$ with eigenvalue $\lambda$ (i.e., $f\ast x = \lambda x$) if and only if
\[\wh{f}(\rho)\wh{x}(\rho) = \lambda\wh{x}(\rho)\quad \text{ for all } \rho\in\wh{G},\]
i.e., all columns of $\wh{x}(\rho)$ (when viewed as a $d_\rho \times d_\rho$ matrix) lie in the eigenspace of $\wh{f}(\rho)$ corresponding to the eigenvalue $\lambda$.

Let $V_\rho$ be the subspace of functions whose Fourier transform is supported on $\rho$:
\begin{align}
V_\rho &= \{x\in L^2(G): \on{supp}\wh{x}\subseteq\{\rho\}\} \nonumber
\\
&= \{x \in L^2(G) : x(g) = d_\rho \ang{\rho(g), A} \text{ for some } A \in \on{End} W_\rho \}.
\label{eq:Vrho}
\end{align}
For any column vector $\mbf{v}\in W_\rho$, we define
\begin{align}
V_{\rho,\mbf{v}} 
&= \{ x \in V_\rho : \text{every column of } \wh x (\rho) \text{ is a multiple of }\mbf{v}\} \nonumber
\\
&= \{x\in L^2(G): x(g) = d_\rho\sang{\rho(g),\mbf{v}\mbf{w}^\dagger}_\HS\text{ for some  }\mbf{w}\in W_\rho\} \subseteq V_\rho.\label{eq:Vrhov}
\end{align}
In particular, if $\wh f(\rho) \mbf{v} = \lambda \mbf{v}$ for some $\lambda \in \RR$, then
$M_f x = \lambda x$ for all $x \in V_{\rho,\mbf{v}}$ (as can be seen by taking the Fourier transform).
Furthermore, if $\mbf{v}, \mbf{v}' \in W_\rho$ with $\mbf{v}^\dagger \mbf{v}' = 0$, then $\ang{x,x'} = 0$ for all $x \in V_{\rho, \mbf{v}}$ and $x' \in V_{\rho, \mbf{v}'}$.

To summarize, we have an orthogonal decomposition (the orthogonality is easy to check via the Fourier transform)
\[
    \CC^G = \bigoplus_\rho V_\rho.
\]
For each $\rho \in \wh G$, let $\mbf{v}_1^\rho,\ldots,\mbf{v}_{d_\rho}^\rho \in W_\rho$ be an eigenbasis of $\wh f(\rho) \in \on{End} W_\rho$, and call the corresponding eigenvalues $\lambda_{\rho,1},\ldots,\lambda_{\rho,d_\rho}$. We have an orthogonal decomposition
\[
    V_\rho = \bigoplus_{j=1}^{d_\rho}V_{\rho,\mbf{v}_j^\rho}
\]
and $M_f x = \lambda_j^\rho x$ for each $x \in V_{\rho, \mbf{v}_j^\rho}$.
Thus the eigenvalues of $M_f$ consists of $\lambda_{\rho, j}$ with multiplicity $d_\rho$, ranging over all $\rho \in \wh G$ and $j \in [d_\rho]$. The eigenspace of $M_f$ corresponding to an eigenvalue $\lambda$ is 
the direct sum of all $V_{\rho, \mbf{v}_j^\rho}$ ranging over all $\rho \in \wh G$ and $j \in [d_\rho]$ with $\lambda_{\rho, j} = \lambda$.

\subsection{Schatten norms} \label{sec:schatten}
The Schatten $p$-norm $\snorm{A}_{S_p}$ of a matrix $A\in\CC^{n\times n}$ is defined via
\[\snorm{A}_{S_p}^p = \sum_{i=1}^n\sigma_i(A)^p,\]
where $\sigma_1(A), \dots, \sigma_n(A)$ are the singular values of $A$.

The Schatten $p$-norm satisfies a noncommutative H\"older's inequality (e.g., \cite[Corollary~IV.2.6]{Bha97}): for $1\le p\le q\le \infty$ with $1/p + 1/q = 1$, we have 
\begin{equation}\label{eq:Hold}
\sang{A,B}_\HS\le\snorm{A}_{S_p}\snorm{B}_{S_q}.
\end{equation}

Given a function $f \colon G \to \CC$ on a finite group $G$,
we define its Schatten $p$-norm $\norm{f}_{S_p}$ to be the Schatten $p$-norm of its associated matrix $M_f$ (giving the linear map $x \mapsto f \ast x$ on $\CC^G$):
\begin{equation} \label{eq:Sp}
\norm{f}_{S_p} 
= \paren{\sum_i \sigma_i(M_f)^p}^{1/p}
= \paren{\sum_\rho d_\rho \snorm{\wh{f}(\rho)}_{S_p}^p}^{1/p}.
\end{equation}

\section{Construction} \label{sec:construction}

In this section, we prove \cref{thm:lower} by constructing a Cayley graph on a group $G$ with an eigenspace all of whose eigenfunctions satisfy $\norm{x}_{L^{\infty(G)}} \gtrsim \norm{x}_{L^2(G)} \sqrt{\log\abs{G}}/\log\log \abs{G}$. 

To motivate our construction, we first explain in \cref{subsec:unitary-group} what happens for the unitary group $G = U(d)$, which is simpler to analyze although it is not finite. 
Then, in \cref{subsec:weighted}, we explain how to construct an edge-weighted Cayley graph on a certain finite subgroup of $U(d)$.
We then explain in \cref{subsec:unweighted} how to convert the edge-weighted construction to an unweighted construction via sampling, and show that eigenvectors maintain their desired properties. 
Only \cref{subsec:unweighted} is required for the proof of \cref{thm:lower}, and the earlier subsections are solely for motivation, but we hope that they are helpful to the readers.

\subsection{Unitary group}\label{subsec:unitary-group}
Let $G = U(d)$.
Let $\rho$ denote the standard representation of $G$ on $\CC^d$, which is irreducible.
Let $V_\rho$ denote the subspace of $L^2(G)$ consisting of all $x \in L^2(G)$ of the form $x(g) = d \sang{\rho(g), A}_\HS$ for some $A \in \CC^{d\times d}$ , i.e., the Fourier transform $\wh{x}$ is supported at $\rho$ and $\wh{x}(\rho) = A$. Note that this definition of $V_\rho$ agrees with our earlier definition in \cref{eq:Vrho} for finite groups.

\begin{claim}\label{clm:init-inequality} For any $x \in V_\rho$, we have $\|x\|_{L^\infty(G)} \ge \sqrt{d} \|x\|_{L^2(G)}$.
\end{claim}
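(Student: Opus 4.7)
The plan is to translate everything into linear-algebraic data on the matrix $A$ attached to $x$ via $x(g) = d\langle \rho(g), A\rangle_{\HS} = d\,\Tr(g^\dagger A)$, then bound the $L^\infty$ norm from below using a carefully chosen $g$ and compare two Schatten norms of $A$.

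First, I would compute $\|x\|_{L^2(G)}^2$ via the Schur orthogonality relations on the compact group $U(d)$ (the analogue of Parseval used in \cref{subsec:fourier} for finite groups, with integration against Haar measure replacing the uniform average). This gives $\|x\|_{L^2(G)}^2 = d\,\|A\|_{\HS}^2$, so the target inequality is equivalent to
\[
 \|x\|_{L^\infty(G)} \;\ge\; d\,\|A\|_{\HS}.
\]

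Next, I would lower-bound $\|x\|_{L^\infty(G)}$. Since $|x(g)| = d\,|\Tr(g^\dagger A)|$, taking a supremum over $g \in U(d)$ gives the variational characterization of the trace norm: writing an SVD $A = U\Sigma V^\dagger$ and choosing $g = UV^\dagger \in U(d)$ makes $g^\dagger A = V\Sigma V^\dagger$, whose trace equals $\sum_i \sigma_i(A) = \|A\|_{S_1}$. Hence $\|x\|_{L^\infty(G)} \ge d\,\|A\|_{S_1}$.

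Finally, I would compare Schatten norms: for nonnegative singular values, $\big(\sum_i \sigma_i\big)^2 \ge \sum_i \sigma_i^2$, i.e.\ $\|A\|_{S_1} \ge \|A\|_{S_2} = \|A\|_{\HS}$. Chaining the two bounds gives $\|x\|_{L^\infty(G)} \ge d\,\|A\|_{\HS} = \sqrt{d}\,\|x\|_{L^2(G)}$, as desired. None of the steps look like a serious obstacle; the only thing to be careful about is bookkeeping the factors of $d$ arising from the normalization $x(g) = d\langle \rho(g), A\rangle_{\HS}$ together with the $1/d$ in Schur orthogonality.
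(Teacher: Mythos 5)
Your proposal is correct and follows essentially the same route as the paper: use Parseval/Schur orthogonality to get $\|x\|_{L^2(G)} = \sqrt{d}\,\|A\|_{\HS}$, recognize $\sup_{U\in U(d)}|\langle U,A\rangle_{\HS}| = \|A\|_{S_1}$, and then compare $\|A\|_{S_1}\ge\|A\|_{S_2}$. The only cosmetic difference is that you make the trace-norm step explicit via the SVD and a chosen optimizer $g=UV^\dagger$, whereas the paper just cites duality of Schatten norms.
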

\begin{proof}
Let $A \in \CC^{d \times d}$ be such that $x(g) = d \sang{\rho(g), A}_\HS$. By Fourier inversion and Parseval, $\snorm{x}_{L^2(G)} = \sqrt{d}\snorm{A}_\HS$. Thus we have
\[
\|x\|_{L^\infty(G)} = d \sup_{U \in U(d)}\sang{U, A}_\HS = d \snorm{A}_{S_1} \ge d\snorm{A}_{S_2} = d \snorm{A}_\HS = \sqrt{d} \|x\|_{L^2(G)}
\]
by definition of $\rho$ and duality of the Schatten norms.
\end{proof}

For a sufficiently generic  $f \colon G \to \CC$ (assuming no unwanted eigenvalue collisions), 
the subspace $V_\rho$ is a direct sum of eigenspaces of the operator $M_f$, there by giving a continuous analog of \cref{thm:lower}. The actual construction proving \cref{thm:lower} will involve a discretization of this construction.

\subsection{Weighted construction}\label{subsec:weighted}
In this section we prove a weighted analogue of \cref{thm:lower} which serves as a stepping stone towards the entire proof. Recall from earlier that for a function $f: G\to\CC$ we have $M_f(g,h) = f(gh^{-1})/\abs{G}$.
\begin{theorem}\label{thm:main-signed-weighted}
There exist some constant $c > 0$ and infinitely many groups $G$ and functions $f:G\to \RR$ such that $M_f$ has an eigenspace all of whose elements $x$ satisfy 
\[
\norm{x}_{L^\infty(G)} \ge c\frac{\sqrt{\log \abs{G}} }{ \log\log \abs{G}} \norm{x}_{L^2(G)}.
\]
\end{theorem}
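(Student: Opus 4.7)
The approach is to discretize the continuous construction of \cref{subsec:unitary-group}. I want a finite group $G$ with an irreducible unitary representation $\rho \colon G \to U(d_\rho)$ such that (up to a logarithmic loss) the image $\rho(G) \subset U(d_\rho)$ retains the key spreading property of $U(d_\rho)$ exploited in \cref{clm:init-inequality}, together with a real-valued $f \colon G \to \RR$ isolating an appropriate eigenspace of $M_f$.

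Following the hint from the figures, I take $G = S_d \ltimes (\ZZ/2\ZZ)^d$, the hyperoctahedral group, so that $\abs{G} = 2^d d!$ and hence $\log\abs{G} = \Theta(d \log d)$ and $\log\log\abs{G} = \Theta(\log d)$. I select an irreducible representation $\rho$ of $G$ together with a unit vector $\mathbf{v} \in W_\rho$. I then construct $f \colon G \to \RR$ (with $f(g^{-1}) = f(g)$) so that $\wh{f}(\rho)$ has $\mathbf{v}$ as a simple eigenvector with some eigenvalue $\lambda$, and so that $\lambda$ is disjoint from the spectrum of $\wh{f}(\rho')$ for every other irreducible $\rho'$. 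Such an $f$ exists by a perturbation of a generic real class function (so that Schur's lemma applies on each irrep). By the eigendecomposition of \cref{subsec:eigendecomposition}, the $\lambda$-eigenspace of $M_f$ is then precisely $V_{\rho, \mathbf{v}}$.

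Every $x \in V_{\rho, \mathbf{v}}$ has the form $x(g) = d_\rho \overline{\mathbf{v}^\dagger \rho(g)\mathbf{w}}$ for some $\mathbf{w} \in W_\rho$, with $\snorm{x}_{L^2(G)} = \sqrt{d_\rho}\snorm{\mathbf{v}}\snorm{\mathbf{w}}$ by Parseval and $\snorm{x}_{L^\infty(G)} = d_\rho \sup_{g \in G} \abs{\mathbf{v}^\dagger \rho(g)\mathbf{w}}$. The statement therefore reduces to the uniform lower bound
\[
    \sup_{g \in G} \abs{\mathbf{v}^\dagger \rho(g)\mathbf{w}} \ge c\,\frac{\sqrt{\log\abs{G}}}{\log\log\abs{G}} \cdot \frac{\snorm{\mathbf{v}}\snorm{\mathbf{w}}}{\sqrt{d_\rho}}
\]
for every $\mathbf{w} \in W_\rho$. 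This is the discrete counterpart to the trivial $\sup_{U \in U(d_\rho)} \abs{\mathbf{v}^\dagger U \mathbf{w}} = \snorm{\mathbf{v}}\snorm{\mathbf{w}}$ that powered \cref{clm:init-inequality}, now with a logarithmic loss reflecting the passage to a finite orbit.

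The main obstacle is exactly this spreading inequality. Schur orthogonality yields only $\sup_g \abs{\mathbf{v}^\dagger \rho(g)\mathbf{w}} \ge \snorm{\mathbf{v}}\snorm{\mathbf{w}}/\sqrt{d_\rho}$, which is weaker by the desired factor $\sqrt{\log\abs{G}}/\log\log\abs{G}$, and the naive choice of the standard $d$-dimensional signed-permutation representation fails outright (taking $\mathbf{v} = e_1$ and $\mathbf{w}$ uniform makes the supremum equal $1/\sqrt{d}$, which is too small). The construction must therefore exploit the richer representation theory of $S_d \ltimes (\ZZ/2\ZZ)^d$, picking $\rho$ of dimension $d_\rho$ roughly $d/\log d$ together with a $\mathbf{v}$ for which every orbit $\{\rho(g)\mathbf{w}\}_{g \in G}$ is a sufficiently fine net of the unit sphere of $W_\rho$; delivering this spreading estimate for all $\mathbf{w}$ simultaneously is the technical heart of the argument.
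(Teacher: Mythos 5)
You have the right group, the right high--level strategy (isolate an eigenspace of the form $V_{\rho,\mathbf{v}}$ and reduce to a uniform spreading inequality on the orbit $\{\rho(g)\mathbf{v}\}_{g\in G}$), and the right baseline: Schur orthogonality alone gives only $\sup_g\abs{\mathbf{v}^\dagger\rho(g)\mathbf{w}}\ge\snorm{\mathbf{v}}\snorm{\mathbf{w}}/\sqrt{d_\rho}$, a factor $\sqrt{d_\rho/\log d_\rho}$ short of what is needed. But you then draw a wrong conclusion from your test case and consequently misidentify where the extra factor must come from.

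Your claim that ``the naive choice of the standard $d$-dimensional signed-permutation representation fails outright'' is based on taking $\mathbf{v}=e_1$, and for that particular $\mathbf{v}$ the orbit is indeed too concentrated (it is just $\{\pm e_1,\dots,\pm e_d\}$). But the failure is a feature of the choice of base vector, not of the representation. The paper keeps the standard $d$-dimensional representation $\rho$ of $G=S_d\ltimes(\ZZ/2\ZZ)^d$ and instead chooses the base vector carefully: $\mathbf{a}$ is the unit vector in the direction of $(1,1/\sqrt{2},\dots,1/\sqrt{d})$. For that $\mathbf{a}$, the key estimate (\cref{lem:sup-inequality}) gives $\sup_{g\in G}\abs{\sang{\mathbf{w},\rho(g)\mathbf{a}}}\gtrsim\abs{\mathbf{w}}/\sqrt{\log d}$ for \emph{every} $\mathbf{w}\in\CC^d$, via the rearrangement/sign-flip argument and the summation-by-parts estimate $\bigl(\sum_{j\le d}w_j/\sqrt{j}\bigr)^2\gtrsim\sum_j w_j^2$ when $w_1\ge\cdots\ge w_d\ge 0$. (A quick sanity check on your own bad example $\mathbf{w}=d^{-1/2}(1,\dots,1)$: one can align all signs and sort, so the inner product is $d^{-1/2}\sum_j a_j\asymp d^{-1/2}\cdot\sqrt{d/\log d}=1/\sqrt{\log d}$, exactly the claimed bound.) So no exotic irrep of dimension $\approx d/\log d$ is needed, and in fact your heuristic points you away from the construction that works.

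There is a secondary simplification you are missing: rather than perturbing a generic class function, the paper takes $f(g)=d\sang{\rho(g),\mathbf{a}\mathbf{a}^\intercal}_\HS$, so that $\wh f$ is supported only at $\rho$ with $\wh f(\rho)=\mathbf{a}\mathbf{a}^\intercal$ of rank one. Then $M_f$ has a single nonzero eigenvalue $1$ with eigenspace exactly $V_{\rho,\mathbf{a}}$, and there is no need to worry about spectral collisions with other irreps. The same vector $\mathbf{a}$ plays both roles: it defines $f$ (hence the eigenspace) and it is the vector for which the spreading estimate holds. The real content of the proof is thus \cref{lem:sup-inequality}, which your proposal acknowledges as ``the technical heart'' but does not supply, and which your reasoning incorrectly suggests cannot be achieved for the standard representation.
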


Let $G = S_d \ltimes (\ZZ/2\ZZ)^d$, where $S_d$ acts on $(\ZZ/2\ZZ)^d$ by permuting coordinates. 
The group has a natural $d$-dimensional representation on $\CC^d$, which we call $\rho$, where $S_d$ permutes the coordinates and $(\ZZ/2\ZZ)^d$ flips the signs of the coordinates. 
It is easy to check that $\rho$ is irreducible.

We need a $G$-orbit on the unit sphere in $\CC^d$ with large width in every direction. 
The next lemma serves as a finitary analogue of \cref{clm:init-inequality}.
This lemma also appears in \cite{Gre20} and we include its proof here for the convenience of the reader. 

\begin{lemma}\label{lem:sup-inequality}
Let $\mbf{a}$ be the unit vector in the direction of $(1,1/\sqrt{2},\ldots,1/\sqrt{d})$. Then for any $\mbf{v}\in\mb{C}^d$ we have 
\[\sup_{g\in G}|\langle\mbf{v},\rho(g)\mbf{a}\rangle|\gtrsim\frac{|\mbf{v}|}{\sqrt{\log d}}.\]
\end{lemma}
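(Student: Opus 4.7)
The plan is to exploit the explicit structure of the orbit $\{\rho(g)\mathbf{a}: g \in G\}$. Since $G = S_d \ltimes (\ZZ/2\ZZ)^d$ acts by permuting coordinates and flipping signs, and $\mathbf{a}_i = 1/\sqrt{i H_d}$ with $H_d = \sum_{j=1}^d 1/j$, every $\rho(g)\mathbf{a}$ is a signed permutation of $(1/\sqrt{H_d})(1, 1/\sqrt{2}, \ldots, 1/\sqrt{d})$; in particular all of its entries are real.

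First I would reduce to the case of real $\mathbf{v}$. Writing $\mathbf{v} = \mathbf{u} + i\mathbf{w}$ with $\mathbf{u}, \mathbf{w} \in \RR^d$, the fact that $\rho(g)\mathbf{a}$ is real gives $|\ang{\mathbf{v}, \rho(g)\mathbf{a}}|^2 = \ang{\mathbf{u}, \rho(g)\mathbf{a}}^2 + \ang{\mathbf{w}, \rho(g)\mathbf{a}}^2$, so, losing a factor of $\sqrt{2}$, it suffices to prove the bound for whichever of $\mathbf{u}, \mathbf{w}$ has norm at least $|\mathbf{v}|/\sqrt{2}$. For real $\mathbf{v}$, the sign-flip part of $g$ can be chosen independently of the permutation part and aligned with the signs of the $v_i$, giving
\[
\sup_{g \in G}|\ang{\mathbf{v}, \rho(g)\mathbf{a}}| = \sup_{\pi \in S_d}\sum_{i=1}^d |v_i|\, a_{\pi(i)}.
\]
Since $a_1 \ge a_2 \ge \cdots$ is decreasing, the rearrangement inequality identifies this supremum with $\sum_i |v|^*_i\, a_i$, where $|v|^*_1 \ge |v|^*_2 \ge \cdots \ge |v|^*_d$ is the decreasing rearrangement of $(|v_1|, \ldots, |v_d|)$.

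It remains to establish the discrete Lorentz-type inequality
\[
\sum_{i=1}^d \frac{|v|^*_i}{\sqrt{i}} \ge |\mathbf{v}|,
\]
which, combined with $a_i = 1/\sqrt{i H_d}$ and $H_d \lesssim \log d$, yields the desired lower bound. I would prove this in one line: from $i\,(|v|^*_i)^2 \le \sum_{j \le i}(|v|^*_j)^2 \le |\mathbf{v}|^2$ we obtain the pointwise bound $|v|^*_i \le |\mathbf{v}|/\sqrt{i}$, so
\[
|\mathbf{v}|^2 = \sum_i (|v|^*_i)^2 \le |\mathbf{v}| \sum_i \frac{|v|^*_i}{\sqrt{i}}.
\]
The only step with any real content is spotting this pointwise bound on the decreasing rearrangement; the remainder of the argument (orbit analysis, complex-to-real reduction, and rearrangement inequality) is essentially bookkeeping, so I do not expect a serious obstacle.
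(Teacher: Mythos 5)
Your proposal is correct and follows the same overall strategy as the paper: reduce to $\mathbf{v}\in\RR^d$ using that $\rho(g)\mathbf{a}$ is real, exploit the full signed-permutation orbit together with the rearrangement inequality to reduce to $\sum_i |v|^*_i a_i$ for the decreasing rearrangement $|v|^*$, then prove a discrete Lorentz-type bound. The one place you diverge is the proof of the central inequality $\sum_i |v|^*_i/\sqrt{i} \ge |\mathbf{v}|$. The paper establishes the equivalent statement $\bigl(\sum_j w_j/\sqrt{j}\bigr)^2 \gtrsim \sum_j w_j^2$ by expanding the square, keeping the ``lower-triangular'' terms, and using monotonicity of $w$ together with $\sum_{k\le j} 1/\sqrt{k}\gtrsim\sqrt{j}$ to telescope. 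Your argument is slicker: the pointwise bound $|v|^*_i \le |\mathbf{v}|/\sqrt{i}$, which follows immediately from $i\,(|v|^*_i)^2 \le \sum_{j\le i}(|v|^*_j)^2$, gives $|\mathbf{v}|^2 = \sum_i (|v|^*_i)^2 \le |\mathbf{v}|\sum_i |v|^*_i/\sqrt{i}$ in one line, and moreover it produces the clean constant $1$ rather than an unspecified implicit constant. Both are valid; yours is arguably the preferable exposition. One cosmetic note: your complex-to-real reduction (Pythagorean split of $|\langle\mathbf{v},\rho(g)\mathbf{a}\rangle|^2$ into real and imaginary parts, losing $\sqrt{2}$) and the paper's (pass to whichever of $\operatorname{Re}\mathbf{v},\operatorname{Im}\mathbf{v}$ has larger norm) are the same observation phrased slightly differently.
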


\begin{remark} 
A difficult recent result of Green~\cite{Gre20}, confirming a conjecture of Zhao, showed that for every \emph{finite} subgroup $G$ of $U(d)$ and every unit vector $\mbf{a} \in \CC^d$, there is some unit vector $\mbf{v}$ such that $\sup_{g \in G} \abs{\ang{\mbf{v}, \rho(g)\mbf{a}}} \lesssim 1/\sqrt{\log d}$ (i.e., a tight upper bound on the width of every finite transitive subset of a sphere).
In contrast, the width of an infinite transitive subset of the sphere can be as large as $1$ (e.g., the entire sphere).
It is initially quite counterintuitive that a finiteness assumption implies such a dramatic reduction in the width of an orbit.
\end{remark}

\begin{proof}
Let us first assume that $\mbf{v}\in\RR^d$, so
\[\sup_{g\in G}|\langle\mbf{v},\rho(g)\mbf{a}\rangle| = \sup_{g\in G}|\langle\rho(g)\mbf{v},\mbf{a}\rangle|\ge\langle\mbf{w},\mbf{a}\rangle,\]
where $\mbf{w}$ is the vector obtained by making the coordinates of $\mbf{v}$ nonnegative and then rearranging them in nonincreasing order. Let $\mbf{w} = (w_1,\ldots,w_d)$ with $w_1\ge\cdots\ge w_d\ge 0$. Then
\begin{align*}
\paren{\sum_{j=1}^{d}\frac{1}{j}}\langle\mbf{w},\mbf{a}\rangle^2 &= \left(\sum_{j=1}^d\frac{w_j}{\sqrt{j}}\right)^2\\
&\gtrsim w_1^2+\frac{w_2}{\sqrt{2}}\left(w_1+\frac{w_2}{\sqrt{2}}\right)+\frac{w_3}{\sqrt{3}}\left(w_1+\frac{w_2}{\sqrt{2}}+\frac{w_3}{\sqrt{3}}\right)\cdots\\
&\gtrsim w_1^2+\frac{w_2^2}{\sqrt{2}}\left(1+\frac{1}{\sqrt{2}}\right)+\frac{w_3^2}{\sqrt{3}}\left(1+\frac{1}{\sqrt{2}}+\frac{1}{\sqrt{3}}\right)+\cdots\\
&\gtrsim w_1^2+\cdots+w_d^2 = |\mbf{w}|^2 = |\mbf{v}|^2.
\end{align*}
Thus
\[\sup_{g\in G}|\langle\mbf{v},\rho(g)\mbf{a}\rangle|\gtrsim\frac{|\mbf{v}|}{\sqrt{\log d}}.\]

Finally, when $\mbf{v}\in\CC^d$, we can reduce to the real case.
Assume without loss of generality (since $\rho$ is real) that $|\on{Re}\mbf{v}|\ge|\on{Im}\mbf{v}|$. Then
\[\sup_{g\in G}|\langle\mbf{v},\rho(g)\mbf{a}\rangle|\ge\sup_{g\in G}|\langle\on{Re}\mbf{v},\rho(g)\mbf{a}\rangle|\gtrsim\frac{|\on{Re}\mbf{v}|}{\sqrt{\log d}}\ge\frac{|\mbf{v}|}{\sqrt{2\log d}}. \qedhere \]
\end{proof}
\begin{proof}[Proof of \cref{thm:main-signed-weighted}]
Let $\mbf{a}$ be as in \cref{lem:sup-inequality} (viewed as a column vector). Define $f \colon G \to \RR$ via
\[f(g) = d\sang{\rho(g),\mbf{a}\mbf{a}^\intercal}_\HS.\]
Therefore, by Fourier inversion, $\wh{f}$ is supported at $\rho$ and $\wh{f}(\rho) = \mbf{a}\mbf{a}^\intercal$. That is, $f\in V_\rho$ (as in \cref{eq:Vrho}).

As in \cref{eq:Vrhov}, let $V_{\rho,\mbf{a}}$ denote the subspace of $L^2(G)$ consisting of all $x \in L^2(G)$ of the form $x(g) = d\sang{\rho(g), \mbf{a}\mbf{v}^\dagger}_\HS$ for some $\mbf{v} \in \CC^d$. 
From the discussions in \cref{subsec:eigendecomposition}, we see that $M_f$ has exactly one nonzero eigenvalue, namely $1$, and its eigenspace is $V_{\rho,\mbf{a}}$.

We claim that
\[\|x\|_{L^\infty(G)} \gtrsim \sqrt{\frac{d}{\log d}} \|x\|_{L^2(G)}\]
for all $x \in V_{\rho, \mbf{a}}$. Letting $x(g) = d\sang{\rho(g), \mbf{a}\mbf{v}^\dagger}_\HS$, we have
\[\|x\|_{L^\infty(G)} = d \sup_{g \in G} | \sang{\rho(g), \mbf{a}\mbf{v}^\dagger}_\HS| = d\sup_{g \in G} | \sang{\mbf{v},\rho(g) \mbf{a}}| \gtrsim \frac{d}{\sqrt{\log d}} |\mbf{v}|,\]
where the last inequality is \cref{lem:sup-inequality}. Furthermore, by Parseval,
\[\|x\|_{L^2(G)} = \sqrt{d}\norm{\wh{x}(\rho)}_\HS = \sqrt{d}\snorm{\mbf{a}\mbf{v}^\dagger}_\HS = \sqrt{d}|\mbf{v}|.\]
Since $\abs{G} = 2^dd!$, we have $d = (1+o(1))\log{\abs{G}}/\log\log{\abs{G}}$, which completes the proof.
\end{proof}

\subsection{Unweighted construction}\label{subsec:unweighted}

Let us first explain the setup for this entire subsection.
Let $G$ be any finite group (later on we will specialize to $G = S_d \ltimes (\ZZ/2\ZZ)^d$).
Let $\rho \in \wh G$ be a real irreducible representation of $G$ of dimension $d = d_\rho$, i.e., $\rho \colon G \to O(d)$ is a homomorphism.
Finally, we assume that $\sqrt{\abs{G}/\log\abs{G}} > 15 d$.

Fix a unit vector $\mbf{a}\in\RR^d$ and let
\begin{equation}\label{eq:f}
f(g) = \frac{1 - \mbf{a}^\intercal\rho(g)\mbf{a}}{2}.
\end{equation}
By Cauchy--Schwarz, $f(g)\in [0,1]$. Furthermore, $f(g^{-1}) = f(g)$ since $\rho(g)$ has real entries.

Note that $f$ is quite similar to the example given in the proof of \cref{thm:main-signed-weighted}, but shifted and scaled so that its values lie in $[0,1]$. The idea is to sample a random Cayley graph from $f$. Then its eigenvalues will be close to the original. Furthermore, we will show that the top nontrivial eigenspace (which corresponds to $\rho$) does not change much, so the estimate \cref{lem:sup-inequality} will remain valid.

We now sample a random function $h$ based on $f$. Let $G'$ be the set of elements $g\in G$ with $g = g^{-1}$ and $G''$ be a subset of $G\setminus G'$ containing exactly one element of each set $\{g,g^{-1}\}\in\binom{G}{2}$ such that $g\neq g^{-1}$. For $g\in G'$, let $h(g)$ be $1$ with probability $f(g)$ and zero otherwise. For $g \in G''$, let $h(g) = h(g^{-1}) = 1$ with probability $f(g)$ and $0$ otherwise. The choices are independent across $G'\cup G''$. Note that $f(e) = 0$, so $h(e) = 0$.

For each $g \in G$, let $P_g$ denote the matrix with columns and columns indexed by $G \times G$ with entry $1/\abs{G}$ at position $(gx,x)$ for each $x \in G$ and zero elsewhere.
Viewing $M_f$ and $M_h$ as matrices (as described in \cref{subsec:eigendecomposition}), we have
\begin{align*}
M_h &= \sum_{g\in G'}h(g)P_g + \sum_{g\in G''}h(g)(P_g+P_{g^{-1}})\\
&= \sum_{g\in G'}(h(g)-f(g))P_g + \sum_{g\in G''}(h(g)-f(g))(P_g + P_{g^{-1}}) + \sum_{g\in G} f(g)P_g.
\end{align*}
We first compute the spectrum of $\EE[M_h] = M_f$.
\begin{lemma}\label{lem:spectrum}
Let $h$ be as above. The spectrum of $\mb{E}[M_h]$ is $1/2$ with multiplicity $1$, $-1/(2d)$ with multiplicity $d$, and $0$ with multiplicity $\abs{G}-d-1$.
\end{lemma}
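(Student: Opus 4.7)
The plan is to compute the nonabelian Fourier transform $\wh{f}$ explicitly and then read off the spectrum of $M_f$ using the eigendecomposition recipe of \cref{subsec:eigendecomposition}. Since $\EE[M_h] = M_f$, the lemma reduces to a pure Fourier computation on $f$.

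First I would split $f(g) = \tfrac12 - \tfrac12\mbf{a}^\intercal\rho(g)\mbf{a}$. The constant piece $1/2$ contributes only at the trivial representation $\sigma_0$, with value $1/2$, since $\EE_{g\in G}\sigma(g) = 0$ for every nontrivial irreducible $\sigma$. For the second piece, I would expand entrywise and apply Schur orthogonality. Because $\rho$ is a real orthogonal representation, $\ol{\rho(g)} = \rho(g)$, and hence $\EE_g\rho(g)_{ij}\sigma(g)_{kl}$ vanishes for every nontrivial irreducible $\sigma \not\cong \rho$; when $\sigma\cong\rho$, Schur orthogonality gives $\EE_g\rho(g)_{ij}\rho(g)_{kl} = \delta_{ik}\delta_{jl}/d$. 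Multiplying by $\mbf{a}_i\mbf{a}_j$ and summing yields
\[
\wh{f}(\rho) \;=\; -\frac{\mbf{a}\mbf{a}^\intercal}{2d},
\]
while $\wh{f}(\sigma)=0$ for all nontrivial $\sigma\not\cong\rho$ and $\wh{f}(\sigma_0)=1/2$.

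Once $\wh{f}$ is known, the spectrum of $M_f$ drops out of \cref{subsec:eigendecomposition}: the eigenvalues of $M_f$ are exactly the eigenvalues of $\wh{f}(\sigma)$, each occurring with multiplicity $d_\sigma$, ranging over all $\sigma\in\wh{G}$. The trivial representation contributes the eigenvalue $1/2$ once. The matrix $-\mbf{a}\mbf{a}^\intercal/(2d)$ is Hermitian of rank one with eigenvalue $-1/(2d)$ (eigenvector $\mbf{a}$) and eigenvalue $0$ with multiplicity $d-1$; each of these eigenvalues is counted with multiplicity $d_\rho=d$, yielding $d$ copies of $-1/(2d)$ and $d(d-1)$ copies of $0$. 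Every other irreducible $\sigma$ contributes a further $d_\sigma^2$ zero eigenvalues. Summing via $\sum_{\sigma\in\wh{G}}d_\sigma^2 = \abs{G}$, the total multiplicity of $0$ is $d(d-1) + (\abs{G}-1-d^2) = \abs{G}-d-1$, matching the claim.

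The only real content in the argument is the Schur orthogonality step, and I expect no genuine obstacle there: the single minor care needed is to use the symmetric form of orthogonality appropriate for real orthogonal representations rather than the usual complex-conjugate version. Everything else is bookkeeping using $\sum_\sigma d_\sigma^2 = \abs{G}$ and the explicit rank-one structure of $\wh{f}(\rho)$.
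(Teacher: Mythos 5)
Your proof is correct and follows essentially the same route as the paper's: reduce to $\EE[M_h]=M_f$, compute that $\wh f$ is supported on the trivial representation (value $1/2$) and on $\rho$ (value $-\mbf{a}\mbf{a}^\intercal/(2d)$), then read off the spectrum from the eigendecomposition of \cref{subsec:eigendecomposition}. The only cosmetic difference is that you compute $\wh f(\rho)$ from the definition via Schur orthogonality, whereas the paper recognizes $f(g)=\tfrac12 - \tfrac1{2d}\bigl(d\sang{\rho(g),\mbf{a}\mbf{a}^\intercal}_\HS\bigr)$ as already being in Fourier-inverted form and reads off the coefficients directly; these are two sides of the same coin, since the inversion formula itself rests on Schur orthogonality.
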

\begin{proof}
By linearity of expectation, $\EE[M_h] = M_f$. Furthermore, since
\[f(g) = \frac{1 - \mbf{a}^\intercal\rho(g)\mbf{a}}{2} = \frac{1}{2} - \frac{1}{2d}(d\sang{\rho(g),\mbf{a}\mbf{a}^\intercal}_\HS),\]
we see that $\wh{f}(\on{triv}_G) = 1/2$ and $\wh{f}(\rho) = -\mbf{a}\mbf{a}^\intercal/(2d)$ by Fourier inversion, with $\wh{f}$ only supported at these two representations (here $\on{triv}_G$ is the trivial representation of $G$).

The analysis in \cref{subsec:eigendecomposition} therefore shows that $M_f$ has eigenvalues $1/2$ with multiplicity $1$, $-1/(2d)$ with multiplicity $d$, and $0$ for the rest.
\end{proof}

In order to establish quantitative concentration bounds regarding our sampling, we use the matrix Hoeffding inequality by Tropp  \cite{Tro12}. Recall that for self-adjoint $A,B$, the notation $A\preceq B$ means that $B-A$ is positive semidefinite.
\begin{theorem}[{\cite[Theorem~1.3]{Tro12}}]\label{thm:Hoeff}
Consider a finite sequence $\{ \mbf{X}_k \}$ of independent, random, self-adjoint matrices with dimension $d$, and let $\{ \mbf{A}_k \}$ be a sequence of fixed self-adjoint matrices.  Assume that each random matrix satisfies
\[
\mb{E} \mbf{X}_k = \mbf{0}
\quad\text{and}\quad
\mbf{X}_k^2 \preceq \mbf{A}_k^2
\quad\text{almost surely}.
\]
Then, for all $t \geq 0$,
\[
\mb{P}\bigg({ \lambda_{\max}\left( \sum_k \mbf{X}_k \right) \geq t \bigg)}
	\leq d \cdot e^{-t^2 / (8\sigma^2) }
	\quad\text{where}\quad
	\sigma^2 := \norm{ \sum_k \mbf{A}_k^2 }_{\on{op}}.
\]
\end{theorem}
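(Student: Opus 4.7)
The plan is to prove the matrix Hoeffding inequality via the matrix Laplace transform method; the statement is imported from \cite{Tro12} and is not reproved in the present paper, but here is how one would sketch the argument. The first step is a matrix analogue of the Chernoff bound: for any self-adjoint random matrix $Y$ and any $\theta > 0$,
\[
\PP(\lambda_{\max}(Y) \ge t) \le e^{-\theta t}\, \EE \Tr e^{\theta Y},
\]
which reduces the tail estimate for $Y = \sum_k \mbf{X}_k$ to controlling the matrix moment generating function $\EE \Tr \exp\paren{\theta \sum_k \mbf{X}_k}$.

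The crux of the argument is a subadditivity statement for matrix cumulant generating functions: for any independent sequence of self-adjoint matrices $\{\mbf{X}_k\}$,
\[
\EE \Tr \exp\paren{\theta \sum_k \mbf{X}_k} \le \Tr \exp\paren{\sum_k \log \EE e^{\theta \mbf{X}_k}}.
\]
This nontrivial inequality follows from Lieb's concavity theorem---that the map $A \mapsto \Tr \exp(H + \log A)$ is concave on positive definite $A$---applied iteratively after conditioning on the summands one at a time and invoking the tower property of conditional expectation.

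Next I would convert the hypotheses $\EE \mbf{X}_k = \mbf{0}$ and $\mbf{X}_k^2 \preceq \mbf{A}_k^2$ into an operator bound on $\EE e^{\theta \mbf{X}_k}$. The scalar template is Hoeffding's lemma: for $x \in [-a,a]$ one has $e^{\theta x} \le \cosh(\theta a) + (x/a)\sinh(\theta a)$, so a mean-zero random variable supported on $[-a,a]$ satisfies $\EE e^{\theta X} \le \cosh(\theta a) \le e^{\theta^2 a^2/2}$. Transferring this to the operator setting via spectral calculus and the hypothesis $\mbf{X}_k^2 \preceq \mbf{A}_k^2$ yields a bound of the shape $\log \EE e^{\theta \mbf{X}_k} \preceq c\theta^2 \mbf{A}_k^2$ for an absolute constant $c$. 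Plugging this into the subadditivity inequality and using the trivial estimate $\Tr \exp(B) \le d \cdot \exp(\lambda_{\max}(B))$ produces
\[
\EE \Tr \exp\paren{\theta \sum_k \mbf{X}_k} \le d\cdot \exp(c\theta^2 \sigma^2),
\]
after which optimizing $\theta$ in the Laplace bound yields a Gaussian-type tail of the form $d \cdot \exp(-t^2/(C\sigma^2))$, with $C = 8$ after tracking constants.

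The main obstacle throughout is the noncommutativity of the summands: matrix exponentials do not factor, which blocks the naive adaptation of the scalar Hoeffding proof. Both the subadditivity step (which truly rests on Lieb's deep operator-concavity theorem) and the transfer of Hoeffding's lemma to the operator setting (which relies on careful use of operator monotonicity and of the spectral theorem on the interval $[-\lambda_{\max}(\mbf{A}_k), \lambda_{\max}(\mbf{A}_k)]$) require this noncommutative machinery to be deployed with care.
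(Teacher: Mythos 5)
The paper does not prove this theorem; it imports it verbatim as a black box from \cite[Theorem~1.3]{Tro12}, so there is no ``paper's own proof'' to compare against. Your sketch---matrix Laplace transform method, subadditivity of the matrix cumulant generating function via Lieb's concavity theorem, a semidefinite transfer of Hoeffding's lemma, and optimization of $\theta$---faithfully reflects how the result is actually proved in the cited reference, and you are right to flag that you are summarizing Tropp's argument rather than the present paper's.
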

\begin{remark}
The constant $8$ can be replaced by $2$ if $\mbf{X}_k$ and $\mbf{A}_k$ commute almost surely, which will hold true in our application. See \cite[Remark~7.4]{Tro12}.
\end{remark}
\begin{lemma}\label{lem:norm1}
Let $h$ be as above. Then
\[\mb{P}\left[\norm{M_h-\EE[M_h]}_{\on{op}}\le 4\sqrt{\frac{\log(6\abs{G})}{\abs{G}}}\right]\ge \frac23.\]
\end{lemma}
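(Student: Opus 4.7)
The plan is to write $M_h - \mb{E}[M_h]$ as a sum of independent, mean-zero, self-adjoint random matrices and apply \cref{thm:Hoeff}. Concretely, linearity of expectation gives the decomposition
\[
M_h - \mb{E}[M_h] = \sum_{g\in G'} \mbf{X}_g + \sum_{g\in G''} \mbf{X}_g,
\]
where $\mbf{X}_g = (h(g)-f(g))P_g$ for $g\in G'$ and $\mbf{X}_g = (h(g)-f(g))(P_g+P_{g^{-1}})$ for $g\in G''$. Each $\mbf{X}_g$ is independent of the others, has mean zero, and is self-adjoint since $P_g^\intercal = P_{g^{-1}}$. Because $h(g)\in\{0,1\}$ and $f(g)\in[0,1]$, we have $|h(g)-f(g)|\le 1$ almost surely, so $\mbf{X}_g^2 \preceq \mbf{A}_g^2$ with $\mbf{A}_g = P_g$ in the first case and $\mbf{A}_g = P_g+P_{g^{-1}}$ in the second.

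The next step is to bound $\sigma^2 := \|\sum_g \mbf{A}_g^2\|_{\on{op}}$. Using the identities $P_gP_h = P_{gh}/\abs{G}$ (from the convolution structure of the $P_g$'s) and $P_e = I/\abs{G}$, a direct expansion gives $\mbf{A}_g^2 = I/\abs{G}^2$ for $g\in G'$ and $\mbf{A}_g^2 = (P_{g^2}+P_{g^{-2}})/\abs{G} + 2I/\abs{G}^2$ for $g\in G''$. Summing, using $\abs{G'}+2\abs{G''} = \abs{G}$ and the fact that $G''\cup(G'')^{-1} = G\setminus G'$, yields
\[
\sum_g \mbf{A}_g^2 = \frac{1}{\abs{G}}\, I + \frac{1}{\abs{G}} \sum_{g\in G\setminus G'} P_{g^2}.
\]
Since each $P_{g^2}$ is a rescaled permutation matrix with operator norm $1/\abs{G}$, the triangle inequality yields $\sigma^2 \le 2/\abs{G}$.

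Finally, applying \cref{thm:Hoeff} in dimension $\abs{G}$ both to $\sum_g \mbf{X}_g$ and to its negation, and taking a union bound, yields
\[
\mb{P}\bigl(\|M_h - \mb{E}[M_h]\|_{\on{op}} \ge t\bigr) \le 2\abs{G}\exp\bigl(-t^2/(8\sigma^2)\bigr) \le 2\abs{G}\exp\bigl(-t^2\abs{G}/16\bigr).
\]
Setting $t = 4\sqrt{\log(6\abs{G})/\abs{G}}$ makes the right-hand side equal to $2\abs{G}/(6\abs{G}) = 1/3$, which is the desired conclusion. The only nontrivial calculation is the simplification of $\sum_g \mbf{A}_g^2$, which is essentially bookkeeping about the $P_g$'s; the rest is a direct invocation of matrix Hoeffding. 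Note that the setup hypothesis $\sqrt{\abs{G}/\log\abs{G}} > 15d$ is not needed for this lemma, as only the dimension $\abs{G}$ of the matrices enters the bound.
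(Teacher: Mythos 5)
Your proof is correct and follows essentially the same strategy as the paper: decompose $M_h - \EE[M_h]$ into independent mean-zero self-adjoint summands over $G'$ and $G''$, apply the matrix Hoeffding inequality (\cref{thm:Hoeff}) to both $\pm(M_h - \EE[M_h])$, and set $t = 4\sqrt{\log(6|G|)/|G|}$. The only difference is bookkeeping in the bound on $\sigma^2$: the paper bounds each $\mbf{A}_g^2$ by $4I/|G|^2$ using the observation that $(P_g - P_{g^{-1}})^2 \preceq 0$ because $P_g - P_{g^{-1}}$ is antisymmetric, whereas you compute $\sum_g \mbf{A}_g^2$ exactly via $P_g P_h = P_{gh}/|G|$ and then use the triangle inequality with $\|P_{g^2}\|_{\on{op}} = 1/|G|$. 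Both routes give $\sigma^2 \le 2/|G|$ and the same final constant, so this is a cosmetic variation rather than a different argument; your observation that the hypothesis $\sqrt{|G|/\log|G|} > 15d$ is not used here is also consistent with the paper (that hypothesis only enters in \cref{cor:simple}).
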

\begin{proof}
Note that when $g = g^{-1}$ we have that $P_g^2 = I/\abs{G}^2$. Otherwise note that 
\[(P_g + P_{g^{-1}})^2 = 2I/\abs{G}^2 + P_{g^2} + P_{g^{-2}} \preceq 4I/\abs{G}^2\]
as 
\[-2I/\abs{G}^2 + P_{g^2} + P_{g^{-2}} = (P_g - P_{g^{-1}})^2\preceq \mbf{0}.\]
Here we are using that $P_g-P_{g^{-1}}$ is antisymmetric.

Using that $|h(g)-f(g)|\le 1$ almost surely, and applying \cref{thm:Hoeff} to
\[M_h - \EE[M_h] = \sum_{g\in G'}(h(g)-f(g))P_g + \sum_{g\in G''}(h(g)-f(g))(P_g + P_{g^{-1}})\]
we find that
\[\mb{P}\bigg(\lambda_{\max}\big(M_h-\EE[M_h]\big)\ge t\bigg)\le \abs{G}\exp\bigg(\frac{-t^2\abs{G}}{16}\bigg)\]
for all $t\ge 0$. 
Applying the same inequality to $-M_h$ yields
\[\mb{P}\bigg(\norm{M_h-\EE[M_h]}_{\on{op}}\ge t\bigg)\le 2\abs{G}\exp\bigg(\frac{-t^2\abs{G}}{16}\bigg).\]
Setting $t = 4\abs{G}^{-1/2}(\log 6\abs{G})^{1/2}$ yields the lemma.
\end{proof}
This allows us to control the spectrum of $M_h$.
\begin{corollary}\label{cor:simple}
With $h$ as above, we have that with probability at least $2/3$ the number of eigenvalues of $M_h$ in $[-1/d,-1/(3d)]$ is exactly $d$.
\end{corollary}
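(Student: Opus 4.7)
The plan is to pass from the expected spectrum computed in \cref{lem:spectrum} to the random spectrum by combining the operator-norm concentration bound of \cref{lem:norm1} with Weyl's eigenvalue perturbation inequality. Concretely, Weyl's inequality pins each eigenvalue of $M_h$ to within $\snorm{M_h - \EE M_h}_{\on{op}}$ of the corresponding eigenvalue of $M_f = \EE M_h$, and under the hypothesis $\sqrt{\abs{G}/\log\abs{G}} > 15d$ this perturbation is small enough that the $d$ eigenvalues of $M_h$ near $-1/(2d)$ stay inside $[-1/d, -1/(3d)]$ while all other eigenvalues (which are near $0$ or $1/2$) stay outside it.

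Both $M_f$ and $M_h$ are Hermitian: the former because $f(g^{-1}) = f(g)$ (since $\rho$ is real, so $\mbf{a}^\intercal \rho(g^{-1})\mbf{a} = \mbf{a}^\intercal \rho(g)^\intercal \mbf{a} = \mbf{a}^\intercal \rho(g)\mbf{a}$), and the latter because the sampling in \cref{subsec:unweighted} forces $h(g^{-1}) = h(g)$. Sorting eigenvalues in decreasing order, Weyl's inequality gives $\abs{\lambda_i(M_h) - \lambda_i(M_f)} \le \snorm{M_h - M_f}_{\on{op}}$ for every $i$. On the event of \cref{lem:norm1}, which has probability at least $2/3$, the right-hand side is at most $\epsilon := 4\sqrt{\log(6\abs{G})/\abs{G}}$, and a short arithmetic computation from the hypothesis $\sqrt{\abs{G}/\log\abs{G}} > 15d$ shows that $\epsilon$ is smaller than the gap $1/(2d) - 1/(3d) = 1/(6d)$.

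By \cref{lem:spectrum}, the eigenvalues of $M_f$ in decreasing order are $1/2$, then $\abs{G}-d-1$ copies of $0$, then $d$ copies of $-1/(2d)$. Applying Weyl on the above event, the top eigenvalue of $M_h$ lies within $\epsilon$ of $1/2$; the next $\abs{G}-d-1$ eigenvalues lie in $[-\epsilon, \epsilon] \subseteq (-1/(3d), 1/(3d))$, hence strictly above $-1/(3d)$; and the bottom $d$ eigenvalues lie in $[-1/(2d)-\epsilon, -1/(2d)+\epsilon] \subseteq [-2/(3d), -1/(3d)] \subseteq [-1/d, -1/(3d)]$. Therefore exactly $d$ eigenvalues of $M_h$ lie in $[-1/d, -1/(3d)]$, as claimed.

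The main work has already been done in the two preceding lemmas, so there is no substantive obstacle here: the corollary is a direct Weyl-type bookkeeping argument, and the only point requiring any care is checking that the constants in the hypothesis and in \cref{lem:norm1} are calibrated so that the perturbation $\epsilon$ sits strictly below the critical gap $1/(6d)$.
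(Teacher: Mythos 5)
Your proof follows the paper's argument exactly: invoke Lemma~\ref{lem:spectrum} for the spectrum of $\EE[M_h]$, Lemma~\ref{lem:norm1} for the operator-norm concentration event of probability at least $2/3$, and Weyl's inequality to transfer that control to individual eigenvalues, with the calibration $\epsilon \le 1/(6d)$ being the binding constraint. The reasoning and the arithmetic bookkeeping match the paper's proof, so this is correct.
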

\begin{proof}
This is an immediate consequence of Weyl's inequality on deviation of eigenvalues along with \cref{lem:spectrum,lem:norm1}.

More specifically, we have that $M_h$ and $\EE[M_h]$ are self-adjoint and $\norm{M_h-\EE[M_h]}_{\on{op}}\le 4\sqrt{\log(6\abs{G})/\abs{G}}$ with probability at least $2/3$. Thus, writing $\nu_1\ge\cdots\ge\nu_{\abs{G}}$ for the eigenvalues of $\EE[M_h]$ (which we know to be $1/2,0,\ldots,0,-1/(2d),\ldots,-1/(2d)$ with $d$ copies of $-1/(2d)$ by \cref{lem:spectrum}) and $\mu_1\ge\cdots\ge\mu_{\abs{G}}$ for the eigenvalues of $M_h$, we have
\[
|\mu_j-\nu_j|\le 4\sqrt{\frac{\log(6\abs{G})}{\abs{G}}} \le \frac{1}{6d}
\]
for all $1\le j\le \abs{G}$ by Weyl's inequality. 
The final inequality uses the assumption that $\sqrt{\abs{G}/\log \abs{G}} > 15 d$.
\end{proof}
We now show that $\wh{h}(\rho)$ and $\wh{f}(\rho) = \EE[\wh{h}(\rho)]$ are close.
\begin{lemma}\label{lem:norm2}
Let $h$ and $\rho$ be as above. Then
\[\PP\left[ \snorm{\wh{h}(\rho) - \mb{E}[\wh{h}(\rho)]}_{\on{op}} \le 4\sqrt{\frac{\log (6d)}{\abs{G}}} \right]\ge \frac23.\]
\end{lemma}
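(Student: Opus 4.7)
The plan is to reprise the argument of \cref{lem:norm1} with the $|G|\times|G|$ matrix $M_h$ replaced by the much smaller $d\times d$ matrix $\wh h(\rho) = \EE_{g\in G} h(g)\rho(g)$. This is precisely the point at which the ambient dimension in the matrix Hoeffding inequality (\cref{thm:Hoeff}) drops from $|G|$ to $d$, producing $\log(6d)$ in place of $\log(6|G|)$.

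First I would expand
\[
\wh h(\rho) - \EE[\wh h(\rho)] = \frac{1}{|G|}\sum_{g\in G'}(h(g)-f(g))\rho(g) + \frac{1}{|G|}\sum_{g\in G''}(h(g)-f(g))(\rho(g)+\rho(g^{-1}))
\]
as a sum of independent self-adjoint $d\times d$ matrices $\mbf{X}_g$. Self-adjointness uses that $\rho$ is real-valued: for $g\in G'$ one has $g^2=e$, so $\rho(g)$ is an orthogonal involution and hence symmetric, while for $g\in G''$, $\rho(g)+\rho(g^{-1}) = \rho(g)+\rho(g)^\intercal$ is symmetric.

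Next I would bound $\mbf{X}_g^2$. For $g\in G'$, $\rho(g)^2=I$ immediately gives $\mbf{X}_g^2 \preceq |G|^{-2}I$. For $g\in G''$, since $\rho(g)-\rho(g^{-1})$ is a real antisymmetric matrix, its square is negative semidefinite, so $\rho(g^2)+\rho(g^{-2})\preceq 2I$ and hence $(\rho(g)+\rho(g^{-1}))^2 \preceq 4I$, giving $\mbf{X}_g^2 \preceq 4|G|^{-2} I$. Taking each $\mbf{A}_g$ to be the corresponding scalar multiple of $I$ — so $\mbf{A}_g$ commutes with $\mbf{X}_g$, allowing the improved constant $2$ from the remark after \cref{thm:Hoeff} to apply — the variance proxy is
\[
\sigma^2 \le \frac{|G'| + 4|G''|}{|G|^2} = \frac{2|G|-|G'|}{|G|^2} \le \frac{2}{|G|}.
\]

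Finally, applying \cref{thm:Hoeff} to both $\pm(\wh h(\rho) - \EE[\wh h(\rho)])$ and setting $t = 4\sqrt{\log(6d)/|G|}$ yields the exponent $-t^2|G|/4 = -4\log(6d)$, so each one-sided tail is at most $d\cdot (6d)^{-4}\le 1/6$; a union bound then gives the claimed $\ge 2/3$ probability. I expect the argument to pose essentially no obstacle beyond careful bookkeeping, since it is entirely parallel to \cref{lem:norm1}; the only subtle point to track is choosing each $\mbf{A}_g$ proportional to the identity so that the sharper Hoeffding constant (and not the factor of $8$) is available.
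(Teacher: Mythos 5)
Your proof is correct and follows essentially the same route as the paper's: apply the matrix Hoeffding inequality (\cref{thm:Hoeff}) to the $d\times d$ matrix $\wh{h}(\rho)-\EE[\wh{h}(\rho)]$, using the same decomposition over $G'$ and $G''$ and the same variance-proxy bound $\sigma^2\le 2/|G|$, so that the ambient dimension is $d$ rather than $|G|$. The only cosmetic difference is that you invoke the improved Hoeffding constant $2$ from the commuting remark, whereas the paper's displayed bound $2d\exp(-t^2|G|/16)$ corresponds to the constant $8$; either choice yields the stated failure probability $\le 1/3$.
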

\begin{proof}
The proof is essentially identical to the proof of \cref{lem:norm1}. Note that if $g = g^{-1}$ then $\rho(g)^2=I_d$. Otherwise
\[(\rho(g)+\rho(g^{-1}))^2 = 2I_d + \rho(g^2) +  \rho(g^{-2})\preceq 4I_d\]
as 
\[(\rho(g)-\rho(g^{-1}))^2 \preceq \mbf{0}.\]
Here we are using that $\rho(g)-\rho(g^{-1}) = \rho(g)-\rho(g)^\intercal$ is antisymmetric.

Then, using the matrix Hoeffding bound \cref{thm:Hoeff}, it immediately follows that 
\[\mb{P}\bigg(\norm{\wh{h}(\rho) - \EE[\wh{h}(\rho)]}_{\on{op}}\ge t\bigg)\le 2d \exp\bigg(\frac{-t^2\abs{G}}{16}\bigg).\]
Setting $t = 4\abs{G}^{-1/2}(\log 6\abs{G})^{1/2}$ yields the lemma.
\end{proof}

We now show that the top eigenvector of $\wh{h}(\rho)$ and $\wh{f}(\rho) = \mb{E}[\wh{h}(\rho)]$ are close.
This is a special case of the Davis--Kahan Theorem \cite{DK70}.
We include a proof (adapted from \cite[Theorem~5.9]{RH19}) for completeness.
Recall that $\wh{h}(\rho)$ is real so its eigenvectors are real.

\begin{lemma}\label{lem:dk}
Let $h, \rho, \mbf{a}$ be as above. Let $\mbf{b}$ be a real unit eigenvector of the top eigenvalue of $\wh{h}(\rho)$. 
With probability at least $2/3$ we have
\[
\min \{|\mbf{a}+\mbf{b}|,|\mbf{a}-\mbf{b}|\} \le 16\sqrt{2}d\sqrt{\frac{\log (6d)}{\abs{G}}}.
\]
\end{lemma}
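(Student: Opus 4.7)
The plan is to carry out a standard eigenvector perturbation argument exploiting the spectral gap of $\wh{f}(\rho)$. Recall that $\wh{f}(\rho) = -\frac{1}{2d}\mbf{a}\mbf{a}^\intercal$ is rank one, with its unique nonzero eigenvalue $-1/(2d)$ (eigenvector $\mbf{a}$) separated from the remaining eigenvalues (all equal to $0$) by a gap of $1/(2d)$. The idea is that since $\wh{h}(\rho)$ is an operator-norm perturbation of $\wh{f}(\rho)$, its isolated eigenvalue must persist, and its corresponding eigenvector must align with $\mbf{a}$ up to sign.

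First I would invoke \cref{lem:norm2} to obtain, with probability at least $2/3$, the bound $\snorm{E}_{\on{op}} \le \epsilon := 4\sqrt{\log(6d)/\abs{G}}$, where $E := \wh{h}(\rho) - \wh{f}(\rho)$. The standing hypothesis $\sqrt{\abs{G}/\log\abs{G}} > 15d$ then gives $\epsilon < 1/(4d)$. By Weyl's inequality applied to $\wh{h}(\rho) = \wh{f}(\rho) + E$, exactly one eigenvalue $\mu$ of $\wh{h}(\rho)$ lies in $[-1/(2d)-\epsilon,\, -1/(2d)+\epsilon]$ and the rest lie in $[-\epsilon,\epsilon]$; in particular $\mu$ is the eigenvalue of largest absolute value, so $\mbf{b}$ is its eigenvector, and $|\mu|\ge 1/(2d)-\epsilon \ge 1/(4d)$.

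Next I would decompose $\mbf{b} = \alpha\mbf{a} + \mbf{c}$ with $\mbf{c}\perp\mbf{a}$ and $\alpha^2+|\mbf{c}|^2=1$, after possibly replacing $\mbf{b}$ by $-\mbf{b}$ so that $\alpha\ge 0$. A direct computation gives $\wh{f}(\rho)\mbf{b} = -\frac{\alpha}{2d}\mbf{a}$ and $\wh{h}(\rho)\mbf{b} = \mu\mbf{b}$, so
\[
E\mbf{b} \;=\; \mu\mbf{b} - \wh{f}(\rho)\mbf{b} \;=\; \left(\mu+\frac{1}{2d}\right)\alpha\,\mbf{a} + \mu\,\mbf{c}.
\]
Projecting onto $\mbf{a}^\perp$ yields $|\mu|\cdot|\mbf{c}| \le \snorm{E\mbf{b}} \le \epsilon$, and since $|\mu|\ge 1/(4d)$ we obtain $|\mbf{c}|\le 4d\epsilon$.

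Finally, the elementary inequality $1-\sqrt{1-x^2}\le x^2$ on $[0,1]$ gives $|\mbf{b}-\mbf{a}|^2 = 2-2\alpha = 2(1-\sqrt{1-|\mbf{c}|^2}) \le 2|\mbf{c}|^2$, hence $|\mbf{b}-\mbf{a}|\le \sqrt{2}\,|\mbf{c}| \le 4\sqrt{2}\,d\epsilon = 16\sqrt{2}\,d\sqrt{\log(6d)/\abs{G}}$; undoing the sign choice turns this into the stated bound on $\min\{|\mbf{a}+\mbf{b}|,|\mbf{a}-\mbf{b}|\}$. The only real subtlety is the step that converts $|\mu\mbf{c}|\le\epsilon$ into a useful bound on $|\mbf{c}|$: for this I need a lower bound on $|\mu|$, which is exactly what the spectral gap and the hypothesis $\sqrt{\abs{G}/\log\abs{G}} > 15d$ are there to provide.
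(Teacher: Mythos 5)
Your proof is correct, but it takes a genuinely different route from the paper's. The paper proves this lemma via a quadratic-form version of the Davis--Kahan theorem (adapted from Rigollet--H\"utter): starting from the variational characterization $\mbf{b}^\intercal\Sigma'\mbf{b}\le\mbf{a}^\intercal\Sigma'\mbf{a}$ (for the bottom eigenvector), it rewrites the discrepancy as $\sang{\Sigma-\Sigma',\mbf{a}\mbf{a}^\intercal-\mbf{b}\mbf{b}^\intercal}_\HS$, bounds it by $\sqrt{2}\snorm{\Sigma'-\Sigma}_{\on{op}}\snorm{\mbf{a}\mbf{a}^\intercal-\mbf{b}\mbf{b}^\intercal}_\HS$ via Schatten-norm duality and the rank-two trick $\snorm{M}_{S_1}\le\sqrt{2}\snorm{M}_{S_2}$, and then solves for $1-(\mbf{a}\cdot\mbf{b})^2$. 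You instead do a direct resolvent-style perturbation argument: decompose $\mbf{b}=\alpha\mbf{a}+\mbf{c}$, rearrange the eigenvalue equation into $E\mbf{b}=(\mu+\tfrac1{2d})\alpha\mbf{a}+\mu\mbf{c}$, project onto $\mbf{a}^\perp$, and divide by $|\mu|\ge 1/(4d)$, which you justify via Weyl's inequality and the standing gap hypothesis $\sqrt{\abs{G}/\log\abs{G}}>15d$. Both land on exactly the same constant $16\sqrt2 d\sqrt{\log(6d)/\abs{G}}$. The main trade-off: your argument is more elementary (no Schatten machinery, no rank-two inequality) and makes explicit use of the spectral gap; the paper's Davis--Kahan route works purely from the operator-norm bound of \cref{lem:norm2} and does not invoke the gap hypothesis in this lemma at all (it is saved for \cref{cor:simple}). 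Two small remarks: first, your interpretation of ``top eigenvalue'' as the eigenvalue of largest absolute value (equivalently, the most negative one) is the correct reading here --- if one reads it as ``largest eigenvalue'' the statement would be false, and indeed the paper's own proof as written has the wrong sign convention for $\mbf{a}^\intercal\Sigma\mbf{a}-\mbf{b}^\intercal\Sigma\mbf{b}$ (it should be negative with $\Sigma=-\mbf{a}\mbf{a}^\intercal/(2d)$), which is repaired by swapping the roles of $\mbf{a}$ and $\mbf{b}$, i.e.\ treating $\mbf{b}$ as the bottom eigenvector. Second, your claim $\epsilon<1/(4d)$ does follow from the hypothesis: $\abs{G}>225d^2\log\abs{G}$ forces $\log\abs{G}>2\log(15d)>\tfrac{256}{225}\log(6d)$, whence $\abs{G}>256d^2\log(6d)$, so that step is fine, but it is worth spelling out since $4/15>1/4$ and the conclusion is not immediate from $\epsilon<4/(15d)$ alone.
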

\begin{proof}
Let
\[\Sigma = \mb{E}[\wh{h}(\rho)] = \wh{f}(\rho) = -\frac{\mbf{a}\mbf{a}^\intercal}{2d}
\]
and
\[
\Sigma' = \wh{h}(\rho).
\]
Then we have that
\begin{align*}
\mbf{a}^\intercal\Sigma\mbf{a}-\mbf{b}^\intercal\Sigma\mbf{b}& = \mbf{a}^\intercal\Sigma'\mbf{a}-\mbf{b}^\intercal\Sigma\mbf{b} - \mbf{a}^\intercal(\Sigma'-\Sigma)\mbf{a}\\
& \le \mbf{b}^\intercal\Sigma'\mbf{b}-\mbf{b}^\intercal\Sigma\mbf{b} - \mbf{a}^\intercal(\Sigma'-\Sigma)\mbf{a}\\
& = \langle \Sigma-\Sigma', \mbf{a}\mbf{a}^\intercal-\mbf{b}\mbf{b}^\intercal\rangle_\HS\\
& \le \norm{\Sigma'-\Sigma}_{\on{op}}\cdot||\mbf{a}\mbf{a}^\intercal-\mbf{b}\mbf{b}^\intercal||_{S_1}\\
& \le \sqrt{2}\norm{\Sigma'-\Sigma}_{\on{op}}\cdot\norm{\mbf{a}\mbf{a}^\intercal-\mbf{b}\mbf{b}^\intercal}_\HS.
\end{align*}
The second inequality is an application of \cref{eq:Hold}, noting that $\norm{\cdot}_{\on{op}} = \norm{\cdot}_{\on{S_\infty}}$. The last step uses that for a matrix $M$ of rank at most 2, one has $\norm{M}_{S_1}\le\sqrt{2}\norm{M}_{S_2} = \sqrt{2}\norm{M}_\HS$.

Furthermore we have
\[\mbf{a}^\intercal\Sigma\mbf{a}-\mbf{b}^\intercal\Sigma\mbf{b} = \frac{1 - (\mbf{b}\cdot \mbf{a})^2}{2d}\]
and 
\[\norm{\mbf{a}\mbf{a}^\intercal-\mbf{b}\mbf{b}^\intercal}_\HS^2 = \on{Tr}((\mbf{a}\mbf{a}^\intercal-\mbf{b}\mbf{b}^\intercal)^2) = 2 - 2(\mbf{b}\cdot \mbf{a})^2.\]
Therefore we have that
\[(1-(\mbf{b}\cdot \mbf{a})^2)^{1/2}\le 4d\norm{\Sigma'-\Sigma}_{\on{op}}.\]
Then
\[(\mbf{a}\cdot\mbf{b})^2\ge 1 - 16d^2\norm{\Sigma'-\Sigma}_{\on{op}}^2.\]
Negating $\mbf{b}$ if necessary so that $\mbf{a} \cdot \mbf{b} \ge 0$, we have
\[|\mbf{a}-\mbf{b}|^2 = 2 - 2(\mbf{a}\cdot\mbf{b})\le 2\paren{1-\sqrt{1-16d^2\norm{\Sigma'-\Sigma}_{\on{op}}^2}}\le 32d^2\norm{\Sigma'-\Sigma}_{\on{op}}^2.\]
By \cref{lem:norm2}, with probability at least $3/4$, one has $\snorm{\Sigma'-\Sigma}_{\on{op}}^2  \le 16 \log (6d)/\abs{G}$, and the lemma follows.
\end{proof}
We combine the concentration results derived so far.
\begin{proposition}\label{prop:sampling}
Let $h, \mbf{a}$ be defined as above. For $\abs{G}$ sufficiently large, we have with probability at least $1/3$ that all of the following hold:
\begin{itemize}
    \item $M_h$ has exactly $d$ eigenvalues in the interval $[-1/d,-1/(3d)]$.
    \item $\wh{h}(\rho)$ has exactly one eigenvalue $\lambda$ in $[-1/d,-1/(3d)]$.
    \item There is a real unit eigenvector $\mbf{b}$ of $\wh{h}(\rho)$ of eigenvalue $\lambda$ with
    \[|\mbf{a}-\mbf{b}|\le 16\sqrt{2}d\sqrt{\frac{\log (6d)}{\abs{G}}}.\]
\end{itemize}
\end{proposition}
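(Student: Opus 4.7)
The plan is to combine \cref{cor:simple}, \cref{lem:norm2}, and \cref{lem:dk} via a union bound. Each holds with probability at least $2/3$, and effectively there are only two independent concentration events to intersect (the conclusion of \cref{cor:simple}, which governs $M_h$, and the conclusion of \cref{lem:norm2}, which already drives \cref{lem:dk}); so the intersection of the good events holds with probability at least $1-2\cdot(1/3) = 1/3$. I then verify the three bullets deterministically on this intersection.

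The first bullet is exactly \cref{cor:simple}. For the second bullet, I apply Weyl's inequality to the two $d \times d$ Hermitian matrices $\wh{f}(\rho) = -\mbf{a}\mbf{a}^\intercal/(2d)$ and $\wh{h}(\rho)$. The spectrum of $\wh{f}(\rho)$ is $-1/(2d)$ with multiplicity $1$ together with $0$ of multiplicity $d-1$. Under the event of \cref{lem:norm2}, the operator-norm error $\snorm{\wh{h}(\rho) - \wh{f}(\rho)}_{\on{op}} \le 4\sqrt{\log(6d)/\abs{G}}$, combined with the hypothesis $\sqrt{\abs{G}/\log \abs{G}} > 15d$ and $\abs{G}$ large enough that $\log(6d) \le \log\abs{G}$ is absorbed by a constant factor, is at most $1/(6d)$. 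Weyl's inequality then forces exactly one eigenvalue of $\wh{h}(\rho)$ into $[-2/(3d),-1/(3d)] \subset [-1/d,-1/(3d)]$ while the remaining $d-1$ eigenvalues lie in $[-1/(6d), 1/(6d)]$, which is disjoint from $[-1/d,-1/(3d)]$; this is exactly the second bullet, with $\lambda$ the isolated eigenvalue.

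For the third bullet, the vector $\mbf{b}$ supplied by \cref{lem:dk}, which is the real unit eigenvector of $\wh{h}(\rho)$ for the eigenvalue closest to $-1/(2d)$, is precisely the eigenvector of $\lambda$ identified in the previous step. Since an eigenvector is only determined up to sign, I negate $\mbf{b}$ if necessary so that $|\mbf{a}-\mbf{b}| = \min\{|\mbf{a}+\mbf{b}|,|\mbf{a}-\mbf{b}|\}$, and the quantitative bound follows from \cref{lem:dk}.

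The only step that is not a direct invocation is the Weyl's inequality bookkeeping in the second bullet: one must ensure the operator-norm perturbation is strictly smaller than half the eigengap $1/(2d)$ of $\wh{f}(\rho)$. This is where the hypothesis $\sqrt{\abs{G}/\log\abs{G}} > 15d$ (with a slight ``$\abs{G}$ sufficiently large'' to cover the factor $\log(6d)$ versus $\log\abs{G}$) is used; everything else is a direct quotation of earlier lemmas, so I expect no other obstacles.
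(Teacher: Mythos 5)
Your proposal is correct and matches the paper's approach: the paper likewise union-bounds over the failures of \cref{cor:simple} and \cref{lem:norm2}, observing that the event in \cref{lem:dk} coincides with that of \cref{lem:norm2}, to get probability at least $1/3$. Your Weyl-inequality bookkeeping for the second bullet just spells out what the paper treats as immediate, and your observation that $\mbf{b}$ may need to be negated is exactly the right reading of the $\min\{|\mbf{a}+\mbf{b}|,|\mbf{a}-\mbf{b}|\}$ conclusion in \cref{lem:dk}.
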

\begin{proof}
This is an immediate application of \cref{cor:simple,lem:norm2,lem:dk}. 
Note that although we are union-bounding over the failures of three statements (with failure rate at most $1/3$ each), the event used in \cref{lem:dk} is precisely that of \cref{lem:norm2}.
\end{proof}
We are now in position to prove \cref{thm:lower}. The proof will mimic that of \cref{thm:main-signed-weighted}.
\begin{proof}[Proof of \cref{thm:lower}]
As in \cref{subsec:weighted}, 
let $G = S_d\ltimes(\ZZ/2\ZZ)^d$ and $\rho$ be its standard representation on $\CC^d$ (permuting and negating coordinates), which is easily seen to be real. Furthermore let $\mbf{a}$ be the unit vector in the direction of $(1,1/\sqrt{2},\ldots,1/\sqrt{d})$, viewed as a column.

We sample $h$ as in the beginning of this subsection (the assumption $\sqrt{\abs{G}/\log\abs{G}} > 15 d$ holds for sufficiently large $d$). Let our graph be the Cayley graph with adjacency matrix $\abs{G}M_h$. As we care only about scale-invariant properties of eigenspaces, we restrict attention to $M_h$, which acts on $V = \CC^G$.

By \cref{prop:sampling}, for $\abs{G}$ large enough, with probability at least $2/3$ there are exactly $d$ eigenvalues of $M_h$ in $[-1/d,-1/(3d)]$, and $\wh{h}(\rho)$ has one eigenvalue $\lambda$ in this range with a unit eigenvector $\mbf{b}$ satisfying
\begin{equation}\label{eq:ab}
|\mbf{a}-\mbf{b}|\le 16\sqrt{2}d\sqrt{\frac{\log (6d)}{\abs{G}}}.
\end{equation}

By the characterization of eigenspaces of Cayley graphs in \cref{subsec:eigendecomposition}, we see that $\wh{h}(\rho)$ contributes a $d$-dimensional eigenspace to $M_h$ for each of its eigenvalues. Therefore we see that the $d$ eigenvalues of $M_h$ in $[-1/d,-1/(3d)]$ are precisely $d$ copies of this eigenvalue $\lambda$.

In particular, $M_h$ has an eigenvalue $\lambda$ which has eigenspace precisely $V_{\rho,\mbf{b}}$, which recall from \cref{eq:Vrhov} is
\[V_{\rho,\mbf{b}} = \{x\in L^2(G): x(g)=d\sang{\rho(g),\mbf{b}\mbf{w}^\dagger}_\HS \text{ for some } \mbf{w}\in\CC^d\}.\]

Now to show the construction satisfies the conclusion of the theorem. Let $x \in V_{\rho,\mbf{b}}$. 
We wish to show that (recall $\abs{G} = 2^dd!$ so that $d= (1+o(1))\log{\abs{G}}/\log\log{\abs{G}}$)
\[
\norm{x}_{L^\infty(G)} \gtrsim \sqrt{\frac{d}{\log d}} \norm{x}_{L^2(G)} 
= (1+o(1))
\frac{\sqrt{\log\abs{G}}}{\log\log\abs{G}} \norm{x}_{L^2(G)}.
\]
If $x(g) = d\ang{\rho(g),\mbf{b}\mbf{v}^\dagger}_\HS = d\mbf{v}^\dagger\rho(g)\mbf{b}$ for some $\mbf{v}\in\CC^d$, then by Parseval,
\[\|x\|_{L^2(G)} = \sqrt{d}\snorm{\mbf{b}\mbf{v}^\dagger}_\HS = \sqrt{d}|\mbf{v}|.\]
Furthermore,
\begin{align*}
\|x\|_{L^\infty(G)} 
&= \sup_{g\in G}|d\langle\rho(g),\mbf{b}\mbf{v}^\dagger\rangle_\HS| \\
&\ge 
\sup_{g\in G}|d\langle\rho(g),\mbf{a}\mbf{v}^\dagger\rangle_\HS|-\sup_{g\in G}|d\langle\rho(g),(\mbf{a}-\mbf{b})\mbf{v}^\dagger\rangle_\HS|\\
&\gtrsim\frac{d|\mbf{v}|}{\sqrt{\log d}}-d|\mbf{a}-\mbf{b}||\mbf{v}|
\\
&\gtrsim 
\sqrt{\frac{d}{\log d}}\|x\|_{L^2(G)}.
\end{align*}
for sufficiently large $d$, by \cref{lem:sup-inequality} and \cref{eq:ab}. 
\end{proof}

\begin{remark}
To produce the graphs in \cref{fig:spectral-drawing}, we produce a Cayley graph on $G = S_d\ltimes(\ZZ/2\ZZ)^d$ with each possible generator $g\neq e$ included with probability $C(1-f(g))$. ($C = 2/3$ for the first figure and $C = 1/7$ for the second; these constants merely serve to sparsify the graphs for aesthetic purposes.)

In the proof of \cref{thm:lower} above, we sample a graph via a similar procedure and deduce that with positive probability it has an unbounded eigenspace of dimension $d$. In fact, we can further deduce that this unbounded eigenspace has negative eigenvalue, and is the largest eigenvalue in magnitude after the trivial eigenvalue. Thus, a similar proof shows that the graph we sampled above, with positive probability, has an unbounded eigenspace of multiplicity $d\ge 2$ which contains the second and third eigenvalues. Therefore any possible spectral drawing of such a graph will have width at least $c\sqrt{\log\abs{G}}/{\log\log\abs{G}}$, as required.

Finally, in practice, we only sampled the small values $d = 3$ and $d = 4$. In this situation there is a decent probability of sampling a graph not satisfying the desired properties, namely of having the second and third eigenvalues come from the standard representation $\rho$ of $G$, and that their eigenspace is precisely $d$-dimensional. To produce \cref{fig:spectral-drawing}, we check for these properties and resample until they hold.
\end{remark}

\section{Upper Bound}\label{sec:converse}

In this section we prove \cref{thm:upper-cayley,thm:upper-transitive}, showing that all Cayley graphs (\cref{subsec:upper-cayley}) and transitive graphs (\cref{subsec:upper-vertex-transitive}) on $n$ vertices have an $O(\sqrt{\log n})$-bounded orthonormal eigenbasis. 

\subsection{Cayley graphs} \label{subsec:upper-cayley}

\begin{lemma}\label{lem:projection}
Given a set $S$ of $n$ unit vectors in $\mb{R}^d$ (resp. $\CC^d$) we can find an orthonormal (resp. unitary) basis $L$ of $\mb{R}^d$ (resp. $\CC^d$) such that 
\[\max_{\mbf{w}\in L, \mbf{v}\in S}\abs{\ang{\mbf{w},\mbf{v}}}\lesssim \sqrt{\frac{\log (dn)}{d}}.\]
Furthermore, when $S\subseteq\CC^{d}$ we can choose $L$ to have all real vectors.
\end{lemma}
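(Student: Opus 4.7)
The plan is to use the probabilistic method: sample an orthonormal (resp.\ unitary) basis $\mbf{w}_1,\ldots,\mbf{w}_d$ from the Haar measure on $O(d)$ (resp.\ $U(d)$) and show that the required inner-product bound holds for all $dn$ pairs $(i,\mbf{v}) \in [d]\times S$ simultaneously with positive probability. The two key inputs are that each $\mbf{w}_i$ is \emph{marginally} uniform on the unit sphere (even though the $\mbf{w}_i$ are jointly correlated), together with a sharp sub-Gaussian concentration estimate for the inner product of a Haar-random unit vector with a fixed unit vector.

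For the real sphere $S^{d-1} \subset \RR^d$, rotational invariance reduces the distribution of $\langle \mbf{w},\mbf{v}\rangle$ to that of a single coordinate of $\mbf{w}$, giving the standard tail bound $\Pr[\abs{\langle \mbf{w},\mbf{v}\rangle} \ge t] \le 2 e^{-cdt^2}$ for an absolute constant $c>0$. In the complex case, a direct computation shows $\abs{\langle \mbf{w},\mbf{v}\rangle}^2 \sim \mathrm{Beta}(1,d-1)$, yielding the analogous bound $\Pr[\abs{\langle \mbf{w},\mbf{v}\rangle} \ge t] \le (1-t^2)^{d-1} \le e^{-(d-1)t^2}$. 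Taking $t = C\sqrt{\log(dn)/d}$ with $C$ a sufficiently large absolute constant, each pair $(i,\mbf{v})$ fails with probability at most $2(dn)^{-cC^2}$, so union-bounding over the $dn$ pairs leaves total failure probability strictly below $1$, producing a valid basis.

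For the ``furthermore'' clause I would reduce the complex case with a real basis to the real case already proved. Write each $\mbf{v} \in S \subseteq \CC^d$ as $\mbf{v} = \mbf{v}^R + i\mbf{v}^I$ with $\mbf{v}^R,\mbf{v}^I \in \RR^d$ and $\abs{\mbf{v}^R}^2 + \abs{\mbf{v}^I}^2 = 1$. For any real $\mbf{w}$ we have $\abs{\langle \mbf{w},\mbf{v}\rangle}^2 = \langle \mbf{w},\mbf{v}^R\rangle^2 + \langle \mbf{w},\mbf{v}^I\rangle^2$, so it suffices to bound real inner products. Let $S' \subset \RR^d$ be the set (of size at most $2n$) of normalizations of the nonzero $\mbf{v}^R, \mbf{v}^I$; applying the real case to $S'$ and using $\abs{\mbf{v}^R},\abs{\mbf{v}^I} \le 1$ gives a real orthonormal basis achieving the desired bound up to a factor of $\sqrt{2}$, which is absorbed into the implicit constant.

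There is no deep obstruction here; the argument is essentially a textbook union-bound calculation. The only point requiring care is making sure that the spherical concentration estimate produces the sharp sub-Gaussian rate $e^{-\Omega(dt^2)}$, so that the logarithmic loss from the union bound over $dn$ pairs yields exactly the claimed $\sqrt{\log(dn)/d}$ rate rather than something weaker.
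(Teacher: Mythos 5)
Your proof is correct and follows essentially the same probabilistic approach as the paper: a Haar-random orthonormal basis, sub-Gaussian concentration of the inner product of a random unit vector with a fixed one, and a union bound over the $dn$ pairs, with the ``furthermore'' clause handled by splitting each complex vector into (normalized) real and imaginary parts and applying the real case to the resulting set of at most $2n$ vectors. The only difference is that your separate treatment of the complex unitary case via the $\mathrm{Beta}(1,d-1)$ tail is superfluous: the real-basis conclusion you prove for $S \subseteq \CC^d$ already yields a unitary basis of $\CC^d$, which is exactly how the paper dispenses with that case.
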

\begin{proof}
Let us first do the real case.
Recall the following standard bound on the volume of spherical caps in high dimensions (e.g., \cite[Lemma~2.2]{Bal97}): for a uniformly random unit vector $\mbf{w} \in \RR^d$ and fixed unit vector $\mbf{v} \in \RR^d$, one has
\begin{equation}\label{eq:spherical-cap}
\PP( \ang{\mbf{w}, \mbf{v}} \ge \epsilon )
= \PP( \abs{\mbf{w} - \mbf{v}}^2 \le  2-2\epsilon )
\le e^{-d \epsilon^2/2}.
\end{equation}

Let $\epsilon = \sqrt{2\log (4dn)/d}$. Then applying union bound with \cref{eq:spherical-cap}, we find that an orthonormal basis $L = \{\mbf{w}_1,\ldots,\mbf{w}_d\}$ uniformly at random satisfies
\[
\PP\paren{ \max_{\mbf{w} \in L, \mbf{w} \in S}\abs{\ang{\mbf{w}, \mbf{v}}} \ge \epsilon } \le 2dn e^{-d\epsilon^2/2} \le \frac12.
\]
So there is some $L$ such that $\ang{\mbf{w}, \mbf{v}} < \epsilon \lesssim \sqrt{\log (dn) /d}$ for all $\mbf{w} \in L$ and $\mbf{v} \in S$.

For the complex case, for each $v\in S$ write $\mbf{v} = \mbf{v}_1 + i \mbf{v}_2$ for $\mbf{v}_1,\mbf{v}_2\in \mb{R}^n$. We apply the real case of this lemma to the set
\[S' = \bigg\{\frac{\mbf{v}_1}{\snorm{\mbf{v}_1}_2}: \mbf{v}\in S\bigg\}\cup\bigg\{\frac{\mbf{v}_2}{\snorm{\mbf{v}_2}_2}: \mbf{v}\in S\bigg\}\]
to obtain a basis $L$ of $\mb{R}^d\subseteq\mb{C}^d$. Then for each $\mbf{w}\in L$ and $\mbf{v}\in S$, we have
\[\abs{\ang{\mbf{w},\mbf{v}}}\le\abs{\ang{\mbf{w},\mbf{v}_1}} + \abs{\ang{\mbf{w},\mbf{v}_2}}\le\abs{\ang{\mbf{w},\frac{\mbf{v}_1}{\snorm{\mbf{v}_1}_2}}} + \abs{\ang{\mbf{w},\frac{\mbf{v}_2}{\snorm{\mbf{v}_2}_2}}}\lesssim\sqrt{\frac{\log(dn)}{d}}.\qedhere\]
\end{proof}

Now we are ready to prove \cref{thm:upper-cayley}. In essence our argument amounts to choosing a random unitary basis (via \cref{lem:projection}) for each eigenspace coming from the representation theory of the group $G$ of the Cayley graph. However, in order to choose a \emph{real} orthonormal eigenbasis, we essentially pair up conjugate irreducible representations. 
This technicality is unnecessary if we only wish to find a unitary eigenbasis.

\begin{proof}[Proof of \cref{thm:upper-cayley}]
Let $S$ be the symmetric generating set of the Cayley graph and $\mbm{1}_S$ be the corresponding indicator function. Recall, from \cref{subsec:eigendecomposition}, the orthogonal decomposition of $V = \CC^G$ as
\begin{equation}\label{eq:V-decomp}
V 
=
\bigoplus_{\rho \in \wh G} V_\rho 
=
\bigoplus_{\rho\in\wh{G}}\bigoplus_{j=1}^{d_\rho}V_{\rho,\mbf{v}_j^\rho}.
\end{equation}
See \cref{subsec:eigendecomposition} for the definitions of $V_\rho$ and $V_{\rho,\mbf{v}}$. 
Here the vectors $\mbf{v}_j^\rho$, $j \in [d_\rho]$, form a unitary eigenbasis of $\wh{\mbm{1}_S}(\rho)$ with respective eigenvalues $\lambda_{\rho,j}$.
The eigenspace of $M_{\mbm{1}_S}$ corresponding to some eigenvalue $\lambda$ is a direct sum of all components $V_{\rho,\mbf{v}_j^\rho}$ with $\lambda_{\rho, j} = \lambda$.
Using this decomposition, we shall construct a real-valued unitary eigenbasis for the operator $M_{\mbm{1}_S}$ on $V$ as follows:
\begin{itemize}
    \item For each conjugate pair $(\rho, \ol\rho)$ and eigenvector $\mbf{b}$ of $\wh{\mbm{1}_S}(\rho)$,
    we will find a real-valued unitary basis of $V_{\rho, \mbf{b}} \oplus V_{\ol\rho, \ol{\mbf{b}}}$.
    \item For each irreducible representation $\rho \in \wh G$ such that $\rho$ and $\ol{\rho}$ are isomorphic (such representations are called \emph{self-dual}), we will find a special eigenbasis of $\wh{\mbm{1}_S}(\rho)$, and a specific construction giving a unitary basis of $V_\rho$ that will depend on whether the matrix $Q$ satisfying $\rho Q = Q \ol{\rho}$ is symmetric or antisymmetric.
\end{itemize}
Note that the second case includes $\rho$ which can be realized as a real representation, but not all self-dual representations are of this form (e.g. the two-dimensional irreducible representation of the quaternion group $Q_8$). Furthermore, we will ensure that all the functions $x$ chosen as basis elements above satisfy 
\[
\norm{x}_{L^\infty(G)}
\lesssim\sqrt{\log \abs{G}}\norm{x}_{L^2(G)}.
\]

As a model case we consider $\rho \in \wh G$ a real irreducible representation of dimension $d = d_\rho$, acting on $\RR^d$.
We will not, strictly speaking, need this analysis in the final argument as our treatment of self-dual representations is strictly more general.
Let $\mbf{b} \in \RR^d$ be a real eigenvector of $\wh{\mbm{1}_S}(\rho)$. 
Recall
\[
V_{\rho,\mbf{b}} = \{x\in L^2(G): x(g) = d\sang{\rho(g),\mbf{b}\mbf{w}^\dagger}_\HS\text{ for some }\mbf{w}\in \CC^d\}.
\]
Similar to in the proof of \cref{thm:lower}, writing $x_{\mbf{v}}: G\to\CC$ for the function $x_{\mbf{v}}(g) = d\sang{\rho(g),\mbf{b}\mbf{v}^\dagger}_\HS  = d \sang{\rho(g)\mbf{v},\mbf{b}}$, we have
\[\|x_{\mbf{v}}\|_{L^\infty(G)} = d \cdot \sup_{g\in G}|\langle\mbf{v},\rho(g)\mbf{b}\rangle|\]
and
\[\|x_{\mbf{v}}\|_{L^2(G)} = \sqrt{d}|\mbf{b}||\mbf{v}| = \sqrt{d}|\mbf{v}|.\]
By the real version of \cref{lem:projection} applied to $S = \{\rho(g)\mbf{b}: g\in G\}$, there is an orthonormal basis $L$ of $\RR^d$ such that (note that $d^2\le\abs{G}$)
\[\sup_{\mbf{v}\in L, g\in G}|\langle\mbf{v},\rho(g)\mbf{b}\rangle|\lesssim\sqrt{\frac{\log (d\abs{G})}{d}}
\lesssim \sqrt{\frac{\log \abs{G}}{d}}. \]
Then for each $\mbf{v}\in L$ we have
\[\|x_{\mbf{v}}\|_{L^\infty(G)}
\lesssim\sqrt{\log \abs{G}}\|x_{\mbf{v}}\|_{L^2(G)}.\]
By Parseval's identity, we see that $\{x_{\mbf{v}}/\sqrt{d} : \mbf{v}\in L\}$ forms a unitary basis of $V_{\rho,\mbf{b}}$. Also note that $x_{\mbf{v}}$ is real-valued for each $\mbf{v}\in\RR^d$. This completes the case of real $\rho$.

Next, let $(\rho,\ol{\rho})$ be a conjugate pair of irreducible representations with $\rho$ and $\ol{\rho}$ not isomorphic to each other.
Again let $d = d_\rho$. 
For each eigenvector $\mbf{b} \in \CC^d$ of $\wh{\mbm{1}_S}(\rho)$, 
by the complex version of \cref{lem:projection},
we find a unitary basis $L$ of $\CC^d$ so that
$\sup_{\mbf{v} \in L, g \in G} \abs{\ang{\mbf{v}, \rho(g)\mbf{b}}} \lesssim \sqrt{(\log \abs{G})/d}$. 
Again writing $x_{\mbf{v}}(g) = d\sang{\rho(g),\mbf{b}\mbf{v}^\dagger}$, we find that $\{x_{\mbf{v}}/\sqrt{d}: \mbf{v}\in L\}$ is a unitary basis of $V_{\rho,\mbf{b}}$ and
\[\|x_{\mbf{v}}\|_{L^\infty(G)}\lesssim\sqrt{\log\abs{G}}\|x_{\mbf{v}}\|_{L^2(G)}.\]
Likewise, taking conjugates, we see that 
$\{\ol{x_{\mbf{v}}}/\sqrt{d}: \mbf{v}\in L\}$
is a unitary basis of $V_{\ol \rho, \ol{\mbf{b}}}$.
Recall that if $M_{\mbm{1}_S} x_{\mbf{v}} = \lambda x_{\mbf{v}}$
then $M_{\mbm{1}_S} \ol{x_{\mbf{v}}} = \lambda \ol{ x_{\mbf{v}}}$ as $M_{\mbm{1}_S}$ is symmetric and hence $\lambda$ is real.

The collection of $2d$ vectors
\[
y_{\mbf{v}}^0 = \frac{x_{\mbf{v}}+\ol{x_{\mbf{v}}}}{\sqrt{2d}}
\quad \text{and} \quad 
y_{\mbf{v}}^1 = \frac{x_{\mbf{v}}-\ol{x_{\mbf{v}}}}{i\sqrt{2d}},
\]
as $\mbf{v}$ ranges over $L$, forms a real-valued unitary basis of 
$V_{\rho,\mbf{b}} \oplus V_{\ol \rho, \ol{\mbf{b}}}$.
Furthermore,
\[
\|y_{\mbf{v}}^0\|_{L^\infty(G)}
\le\sqrt{\frac{2}{d}}\norm{x_{\mbf{v}}}_{L^\infty(G)}
\lesssim \sqrt{\frac{\log\abs{G}}{d}}\norm{x_{\mbf{v}}}_{L^2(G)} = \sqrt{\log\abs{G}}\norm{y_{\mbf{v}}^0}_{L^2(G)}
\]
and similarly for $y_{\mbf{v}}^1$. 
This completes the case of non-self-dual complex irreducible representations.

Finally, let $\rho$ be a self-dual irreducible representation. Again let $d = d_\rho$. Note that $\wh{\mbm{1}_S}(\rho)$ is Hermitian, hence we can choose coordinates on the representation so that it is a real diagonal matrix. Having done so, we now note that $g\mapsto\rho(g)$ and $g\mapsto\ol{\rho(g)}$ are isomorphic representations on the same space, since $\rho$ is self-dual (where complex conjugation is done in the natural way with respect to the coordinates chosen on the space). Hence there is a unitary operator $Q$ so that
\[\rho(g)Q = Q\ol{\rho(g)}\]
for all $g\in G$. Thus
\[\rho(g)Q\ol{Q} = Q\ol{\rho(g)Q} = Q\ol{Q}\rho(g)\]
for all $g\in G$. By Schur's lemma, we deduce that
\[Q\ol{Q} = \omega I\]
for some $\omega\in\mb{C}$. Since $Q$ is unitary, $QQ^\dagger = I$ (we use ${}^\dagger$ to denote Hermitian transpose and ${}^\intercal$ for transpose), which yields
\[Q = \omega Q^\intercal = \omega^2 Q.\]
Since $Q$ is invertible, we deduce $\omega^2 = 1$, and hence $\omega\in\{\pm 1\}$.

From $\wh{\mbm{1}_S}(\rho) = \sum_{g \in S} \rho(g)/\abs{G}$ and $\rho(g)Q = Q\ol{\rho(g)}$
we obtain
\[Q^\dagger\wh{\mbm{1}_S}(\rho)Q = \ol{\wh{\mbm{1}_S}(\rho)} = \wh{\mbm{1}_S}(\rho),\]
since $\wh{\mbm{1}_S}(\rho)$ is a real diagonal matrix, as noted earlier. Therefore $Q$ and $\wh{\mbm{1}_S}(\rho)$ commute. Now choose a unitary simultaneous eigenbasis of $Q$ and $\wh{\mbm{1}_S}(\rho)$, which can be done by the spectral theorem as both operators are normal.

We will actually take a specific basis with more structure. First, note that if $\mbf{b}$ is an eigenvector of $Q$ with eigenvalue $\lambda$ then
\[Q^{-1}\ol{\mbf{b}} = Q^\dagger\ol{\mbf{b}} = \ol{Q^T\mbf{b}} = \omega^{-1}\ol{Q\mbf{b}} = \omega^{-1}\ol{\lambda \mbf{b}}.\]
Thus $\ol{\mbf{b}}$ is an eigenvector of $Q$ with eigenvalue $(\omega^{-1}\ol{\lambda})^{-1} = \omega\lambda$, using $|\lambda| = 1$ since $Q$ is unitary. 
Now we break into sub-cases depending on the value of $\omega\in\{\pm 1\}$.

If $\omega = -1$, then we see that our unitary simultaneous eigenbasis of $Q$ and $\wh{\mbm{1}_S}(\rho)$ can be chosen so that if $\mbf{b}$ is in it, then so is $\ol {\mbf{b}}$, since $\mbf{b}$ and $\ol{\mbf{b}}$ lie in distinct orthogonal eigenspaces of $Q$. 
For such an eigenvector $\mbf{b}$, as earlier we can apply  \cref{lem:projection} to obtain a unitary basis $L$ of $\CC^d$ so that
$\sup_{\mbf{v} \in L, g \in G} \abs{\ang{\mbf{v}, \rho(g)\mbf{b}}} \lesssim \sqrt{(\log \abs{G})/d}$. 
Again writing $x_{\mbf{v}}(g) = d\sang{\rho(g),\mbf{b}\mbf{v}^\dagger}$, we find that $\{x_{\mbf{v}}/\sqrt{d}: \mbf{v}\in L\}$ is a unitary basis of $V_{\rho,\mbf{b}}$ and $\|x_{\mbf{v}}\|_{L^\infty(G)}\lesssim\sqrt{\log\abs{G}}\|x_{\mbf{v}}\|_{L^2(G)}$.

Note that $\ol{x_{\mbf{v}}} \in V_{\rho, \ol{\mbf{b}}}$ since
\[\ol{x_{\mbf{v}}}(g) = 
d\ol{\sang{\rho(g)\mbf{v},\mbf{b}}} 
= d\sang{\ol{\rho(g)\mbf{v}},\ol{\mbf{b}}} 
= d\sang{Q^\dagger\rho(g)Q\ol{\mbf{v}},\ol{\mbf{b}}} 
= d\sang{\rho(g)Q\ol{\mbf{v}},Q\ol{\mbf{b}}}
= d\sang{\rho(g)Q\ol{\mbf{v}}, \omega\lambda\ol{\mbf{b}}}.\]
This shows that $\ol{V_{\rho, \mbf{b}}} = V_{\rho, \ol{\mbf{b}}}$.
Furthermore, as $M_{\mbm{1}_S} x_{\mbf{v}} = \lambda x_{\mbf{v}}$, we have $M_{\mbm{1}_S} \ol{x_{\mbf{v}}} = \lambda \ol{x_{\mbf{v}}}$ since $\lambda$ is real.
Then, as $\mbf{v}$ varies over $L$, the functions
\[
y_{\mbf{v}}^0 = \frac{x_{\mbf{v}}+\ol{x_{\mbf{v}}}}{\sqrt{2d}}
\quad \text{and} \quad 
y_{\mbf{v}}^1 = \frac{x_{\mbf{v}}-\ol{x_{\mbf{v}}}}{i\sqrt{2d}}
\]
form a real-valued unitary basis of $V_{\rho, \mbf{b}} \oplus V_{\rho, \ol{\mbf{b}}}$. This completes the proof of the case $\omega = -1$.

If $\omega = 1$, then for every eigenvector $\mbf{b}$ of $Q$ with eigenvalue $\lambda$, $\ol{\mbf{b}}$ is another eigenvector of $Q$ also with the same eigenvalue $\lambda$.
Thus every eigenspace $U$ of $Q$ satisfies $U = \ol U$. 
A $\CC$-vector space $U$ satisfying $U = \ol U$ is always the $\CC$-extension of the $\RR$-vector space $\re U = \{(\re u_1, \dots, \re u_d) : (u_1, \dots, u_d) \in U\}$ (since every $\mbf{v} \in U$ can be written as $\mbf{x} + i \mbf{y}$ with $\mbf{x} = (\mbf{v} + \ol{\mbf{v}})/2$ and $\mbf{y} = (\mbf{v} - \ol{\mbf{v}})/(2i)$ both having real coordinates). 
Thus we can choose an orthonormal basis of $\RR^d$ consisting of real-valued eigenvectors $\mbf{b}$ of $Q$. 

Fix such a real eigenvector $\mbf{b}$ of $Q$.
For any $\mbf{v} \in \CC^d$, setting 
$x_{\mbf{v}}(g) = d\sang{\rho(g),\mbf{b}\mbf{v}^\dagger}$ 
as before, we have $\ol{x_{\mbf{v}}} \in V_{\rho, \mbf{b}}$ since
\[
\ol{x_{\mbf{v}}}(g) 
= d\ol{\sang{\rho(g)\mbf{v},\mbf{b}}} 
= d \sang{\ol{\rho(g)\mbf{v}},\mbf{b}} 
= d \sang{Q^\dagger\rho(g)Q\ol{\mbf{v}},\mbf{b}} 
= d \sang{\rho(g)Q\ol{\mbf{v}},Q\mbf{b}}
= d \sang{\rho(g)Q\ol{\mbf{v}},\lambda \mbf{b}}.
\]
Thus $\ol{V_{\rho, \mbf{b}}} = V_{\rho, \mbf{b}}$, which then must be the $\CC$-extension of the $d$-dimensional $\RR$-vector space $\re V_{\rho, \mbf{b}}$.

We now apply \cref{lem:projection} to find a unitary basis $L$ of the $d$-dimensional $\RR$-vector space $\{\mbf{v} \in \CC^d : x_{\mbf{v}} \in \re V_{\rho, \mbf{b}} \}$ satisfying
$\sup_{\mbf{v} \in L, g \in G} \abs{\ang{\mbf{v}, \rho(g)\mbf{b}}} \lesssim \sqrt{(\log \abs{G})/d}$. Then $\{x_\mbf{v}/\sqrt{d} : \mbf{v}\in L\}$ is a real-valued unitary basis of $V_{\rho, \mbf{b}}$ with $\|x_{\mbf{v}}\|_{L^\infty(G)}\lesssim\sqrt{\log\abs{G}}\|x_{\mbf{v}}\|_{L^2(G)}$.
\end{proof}

\subsection{Vertex-transitive graphs} \label{subsec:upper-vertex-transitive}
We now extend \cref{thm:upper-cayley} to vertex-transitive graphs; the idea is the same as before, except we first lift to a Cayley graph on the automorphism group $G$ of the original. This trick is closely related to the proof of \cite[Theorem~2.2]{CZ17}.
\begin{proof}[Proof of \cref{thm:upper-transitive}]
Let $G$ denote the automorphism group of the given vertex-transitive graph, acting on the vertex-set from the right. Fix a vertex as the root of the graph. 
Let $H$ denote the stabilizer of the root. Then the vertices of the graph are given by right cosets $Hg$, $g \in H\backslash G$, with the root corresponding to the trivial coset $H$. Thus $|H\backslash G| = n$.

Let $f \colon H\backslash G \to \CC$ denote the edge-weights from the root to other vertices of the graph. Since $G$ induces automorphisms on the graph, the edgeweight of $(H, Hg)$ equals that of $(Hh, Hgh) = (H, Hgh)$ for all $h \in H$. Hence $f(gh) = f(g)$ for all $g \in G$ and $h \in H$. So we can view $f$ as a function $f\colon G\to\CC$ that is $H$-invariant from both the left and right.

A function on the vertex set is represented as $x \colon H\backslash G \to \CC$, which we will view as a left-$H$-invariant $x \colon G \to \CC$, i.e., $x(hg) = x(g)$ for all $g \in G$ and $h \in H$.

A function $x \colon G \to \CC$ satisfies $x(hg) = x(g)$ for all $g \in G$ and $h \in H$ (i.e., it is left-$H$-invariant) 
if and only if 
$\wh x(\rho) = \rho(h)\wh x(\rho)$ for all $\rho \in \wh{G}$ and $h \in H$. 
The latter condition is equivalent to saying that 
the column-space of $\wh x(\rho)$ lies in $U_\rho$, the $1$-eigenspace of $\rho|_H$:
\[
U_\rho := \{v \in W_\rho: \rho(h) v = v\text{ for all } h \in H\}.
\]
The forward implication follows from the Fourier transform formula $\wh x(\rho) = \EE_{g \in G} x(g)\rho(g)$,
while the reverse implication follows from the inversion formula $x(g) = \sum_\rho d_\rho \on{tr}(\rho(g)^\dagger \wh x(\rho))$.

Let $m_\rho = \dim U_\rho$. By counting the dimension of the space of all left-$H$-invariant functions, we obtain
\begin{equation}\label{eq:transitive-dim-count}
\sum_{\rho \in \wh{G}} d_\rho m_\rho = \abs{H\backslash G} = n.
\end{equation}
Indeed, the condition that $\wh{x}(\rho)$ has column-space contained within $U_\rho$ restricts $\wh{x}(\rho)$ to a $d_\rho m_\rho$-dimensional subspace of $\on{End} W_\rho$.

Since $f \colon G \to \CC$ is both left- and right-$H$-invariant, $\wh{f}(\rho) = \rho(h) \wh{f}(\rho) \rho(h')$ for all $h,h' \in H$. So $\wh{f}(\rho)$ leaves $U_\rho$ invariant. Let $v^\rho_1, \dots, v^\rho_{m_\rho} \in U_\rho$ be an eigenbasis of the action of $\wh{f}(\rho)$ on $U_\rho$.

For each $\rho \in \wh{G}$, choose a unitary basis $a_1^\rho, \dots, a_{d_\rho}^\rho$ of $W_\rho$, and for each $j \in [d_\rho]$ and $k\in[m_\rho]$, define $x^\rho_{j,k} \colon G \to \CC$ by setting $\wh {x^\rho_{j,k}}(\rho) = v^\rho_k (a_j^\rho)^\dagger/\sqrt{d_\rho}$ and $\wh x (\rho') = 0$ for all $\rho' \ne \rho$. 

The functions $x^\rho_{j,k}$, with $\rho \in \wh{G}$, $j \in [d_\rho]$, $k \in [m_\rho]$ satisfy the following properties.
\begin{enumerate}
	\item $x^\rho_{j,k}$ is left-$H$-invariant, i.e., are functions $H\backslash G \to \CC$ (since the columns of $\wh{x^\rho_{j,k}}$ are in $U_\rho$),
	\item $x^\rho_{j,k}$ is an eigenfunctions of $M_f$ (since $\wh{f}(\rho)\wh{x^\rho_{j,k}}(\rho)$ is a scalar multiple of $\wh{x^\rho_{j,k}}(\rho)$)
	\item The functions $x^\rho_{j,k}$ are pairwise orthogonal and $\snorm{x^\rho_{j,k}}_2 = 1$. Indeed, by Parseval, one has $\sang{x^\rho_{j,k},x^{\rho'}_{j',k'}}_{\HS} = 0$ if $\rho \ne \rho'$, and 
	\[ 
	\sang{x^\rho_{j,k},x^\rho_{j',k'}}
	=
	d_\rho \sang{\wh{x^\rho_{j,k}}(\rho),\wh{x^\rho_{j',k'}}(\rho)}_{\HS}  = \sang{a_{j'}^\rho, a_j^\rho} \sang{v_k^\rho, v_{k'}^\rho} = 1_{j=j'} 1_{k=k'}.
	\]
	\item They form a basis of all functions $H\backslash G \to \CC$ (by orthogonality and dimension counting \cref{eq:transitive-dim-count}).
\end{enumerate}
Furthermore, we have for each $\rho$, $j\in [d_\rho]$, and $k\in[m_\rho]$ that
\[
x^\rho_{j,k}(g) = d_\rho \ang{\rho(g), \wh {x^\rho_{j,k}}(\rho)}_{\HS} 
= \sqrt{d_\rho} \ang{\rho(g), v_k^\rho(a_j^\rho)^\dagger}_{\HS} = 
\sqrt{d_\rho} \ang{\rho(g)a_j^\rho, v^\rho_k}.
\]

For each fixed $\rho \in \wh G$, set $S_\rho= \{\rho(g)v_k^\rho: g\in G, k\in[m_\rho]\}$. 
Now since $\rho(h)v_k^\rho = v_k^\rho$ for all $k\in[m_\rho]$ (because $M_\rho$ is the $1$-eigenspace of $\rho|_H$), we see that $|S| \le |G/H|m_\rho \le n^2$. 
By the complex version of \cref{lem:projection}, there exists a choice of the unitary basis $a_1^\rho, \dots, a_{d_\rho}^\rho$ in the definition of $x^\rho_{j,k}(g)$ earlier so that
\[
\sup_{j\in[d_\rho],k\in[m_\rho]}\sup_{g\in G}|x^\rho_{j,k}(g)|
= \sup_{j\in[d_\rho],k\in[m_\rho]} \sqrt{d_\rho} \ang{\rho(g)a_j^\rho, v^\rho_k}
\lesssim \sqrt{\log n}.
\]
Thus the functions $x^\rho_{j,k}$, ranging over all irreducible representations $\rho$ and indices $j \in [d_\rho]$ and $k \in [m_\rho]$, form a unitary $\sqrt{\log n}$-bounded eigenbasis.

To obtain a real orthonormal eigenbasis, we can repeat the technique in the proof of \cref{thm:upper-cayley} in the previous subsection. We omit the details.
\end{proof}

\section{Small-set expansion in random Cayley graphs}\label{sec:small-set-expansion}

In this section we prove \cref{thm:naor-nonabelian}, extending Naor's theorem~\cite{Nao12} on small-set expansion for random Cayley graphs to nonabelian groups. 
Recall the definition of the Schatten $p$-norm $\norm{\cdot}_{S_p}$ from \cref{sec:schatten}.
We state below a Hausdorff--Young inequality for groups, which is standard though we include its short proof (see \cite{Kun58} for a proof for locally compact unimodular groups).

\begin{lemma}[Hausdorff–-Young inequality for groups] \label{lem:hy}
Let $G$ be a finite group and $f \colon G \to \CC$. For any $1 \le p \le 2 \le q \le \infty$ with $1/p + 1/q = 1$, one has
\[
\norm{f}_{S_q} \le \norm{f}_{L^p(G)}.
\]
\end{lemma}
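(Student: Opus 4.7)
The plan is to verify the two endpoint cases $(p,q) = (2,2)$ and $(p,q) = (1,\infty)$ and then invoke Riesz--Thorin interpolation on the Fourier transform $f \mapsto \wh f$. The endpoints are each essentially a one-line consequence of machinery already set up in \cref{subsec:fourier,sec:schatten}.

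At $p = q = 2$, the inequality is (an equality and amounts to) Parseval's identity: by the definition in \cref{eq:Sp},
\[
\|f\|_{S_2}^2 = \sum_{\rho\in\wh G} d_\rho \|\wh f(\rho)\|_{\HS}^2 = \EE_{g\in G} |f(g)|^2 = \|f\|_{L^2(G)}^2.
\]
At $p = 1$, $q = \infty$, unitarity of $\rho$ (so $\|\rho(g)\|_{\on{op}} = 1$) together with the triangle inequality for the operator norm gives, for every $\rho$,
\[
\|\wh f(\rho)\|_{\on{op}} = \Bigl\|\EE_{g\in G} f(g)\rho(g)\Bigr\|_{\on{op}} \le \EE_{g\in G} |f(g)| = \|f\|_{L^1(G)},
\]
and taking the supremum over $\rho$ yields $\|f\|_{S_\infty} \le \|f\|_{L^1(G)}$.

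To close the gap, I would view the Fourier transform as a single linear map from $L^p(G)$ into the noncommutative vector-valued space of tuples $(A(\rho))_{\rho\in\wh G}$ equipped with norm $\bigl(\sum_\rho d_\rho \|A(\rho)\|_{S_q}^q\bigr)^{1/q}$, i.e., weighted counting measure $d_\rho$ on $\wh G$ with the Schatten $q$-norm on each fiber $\on{End}(W_\rho)$. Both endpoint estimates above say this map has operator norm at most $1$, so Riesz--Thorin interpolation, parametrized by $\theta \in [0,1]$ with $1/p = 1 - \theta/2$ and $1/q = \theta/2$ (note $1/p + 1/q = 1$), delivers the claimed bound throughout $1 \le p \le 2 \le q \le \infty$ with constant $1$.

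The one point requiring genuine care is the interpolation step itself, since the target is Banach-space-valued (with noncommutative $L^q$ fibers) rather than scalar, so the off-the-shelf scalar Riesz--Thorin theorem does not apply verbatim. The cleanest execution is to run Hadamard's three-lines lemma directly on the bilinear pairing $\sum_\rho d_\rho \sang{\wh f(\rho), B(\rho)}_\HS$, analytically deforming $f$ in the usual Riesz--Thorin fashion (powers of $|f|$ times phases) and deforming the dual test function $B$ via the singular-value decomposition of each $B(\rho)$ with analytic powers of the singular values. The noncommutative H\"older inequality \cref{eq:Hold} plays the role in the three-lines argument that the scalar H\"older inequality plays in the classical proof. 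I expect this interpolation step to be the main obstacle.
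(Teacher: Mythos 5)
Your proof follows exactly the paper's approach: verify the endpoint cases $(p,q)=(2,2)$ via Parseval and $(p,q)=(1,\infty)$ via unitarity and the triangle inequality for the operator norm, then interpolate by Riesz--Thorin. The paper simply invokes Riesz--Thorin without remarking on the subtlety that the target is a noncommutative (Schatten-valued) $L^q$ space, while you correctly flag this and sketch the three-lines argument that resolves it; the paper instead punts to the reference \cite{Kun58}, which carries out the interpolation in full generality for unimodular groups.
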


\begin{proof}
By the Riesz–-Thorin interpolation theorem, it suffices to check the inequality for $(p,q) = (2,2)$ and $(1,\infty)$. For $(p,q) = (2,2)$, we have $\|f\|_{S_2} = \|f\|_2$ by Parseval. For $(p,q) = (1,\infty)$ we have
\[
\norm{f}_{S_\infty} = \max_{\rho \in \wh{G}} \snorm{\wh{f}(\rho)}_{\on{op}} 
= \max_{\rho \in \wh{G}}\norm{\mathbb{E}_g f(g)\rho(g) }_{\on{op}} \le \mathbb{E}_{g \in G} \abs{f(g)} = \norm{f}_1
\]
as $\norm{\rho(g)}_{\on{op}} =1$ for all $g \in G$.
\end{proof}

\begin{lemma} \label{lem:qform-schatten}
Let $G$ be a finite group. For functions $f,x \colon G \to \CC$ and real $p \ge 1$, one has
\[
\abs{\ang{ x, f \ast x }} \le \norm{f}_{S_p} \norm{x}_{2p/(p+1)}^2.
\]
\end{lemma}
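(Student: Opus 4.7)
The plan is to Fourier-expand the pairing and then apply noncommutative Hölder twice, closing with the Hausdorff--Young inequality from \cref{lem:hy}. First, using $\wh{f\ast x}(\rho) = \wh f(\rho)\wh x(\rho)$ and Parseval, I would write
\[
\ang{x, f\ast x} = \sum_{\rho \in \wh G} d_\rho \ang{\wh x(\rho), \wh f(\rho) \wh x(\rho)}_\HS = \sum_{\rho \in \wh G} d_\rho \Tr\!\paren{\wh x(\rho)^\dagger \wh f(\rho) \wh x(\rho)}.
\]
This reduces matters to bounding a trace of a threefold product on each isotypic component.

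Next, for each $\rho$ I would apply the three-factor noncommutative Hölder inequality (a standard iteration of \cref{eq:Hold}) with exponents $(2p/(p-1),\, p,\, 2p/(p-1))$, which are conjugate since $\tfrac{p-1}{2p} + \tfrac{1}{p} + \tfrac{p-1}{2p} = 1$. This yields
\[
\abs{\Tr\!\paren{\wh x(\rho)^\dagger \wh f(\rho) \wh x(\rho)}} \le \snorm{\wh f(\rho)}_{S_p} \, \snorm{\wh x(\rho)}_{S_{2p/(p-1)}}^2.
\]
Summing over $\rho$ with weights $d_\rho$ and applying the ordinary Hölder inequality on the sum with conjugate exponents $p$ and $p/(p-1)$ gives
\[
\abs{\ang{x, f\ast x}} \le \paren{\sum_\rho d_\rho \snorm{\wh f(\rho)}_{S_p}^p}^{1/p} \paren{\sum_\rho d_\rho \snorm{\wh x(\rho)}_{S_{2p/(p-1)}}^{2p/(p-1)}}^{(p-1)/p} = \norm{f}_{S_p}\,\norm{x}_{S_{2p/(p-1)}}^2,
\]
where the last equality is just the Fourier formula \cref{eq:Sp} for the Schatten norms.

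Finally, since $2p/(p-1) \ge 2$ for all $p \ge 1$ (interpreting the boundary case $p=1$ as $S_\infty$ and $L^1$), \cref{lem:hy} with $q = 2p/(p-1)$ and its conjugate $q' = 2p/(p+1)$ gives $\norm{x}_{S_{2p/(p-1)}} \le \norm{x}_{L^{2p/(p+1)}(G)}$, completing the bound. The main obstacle is really just bookkeeping: keeping straight which Hölder exponents pair with which norms, and verifying that the two applications of Hölder (one in the Schatten sense on each $\rho$, one in the ordinary sense across $\rho$'s) interact cleanly with the weighted Parseval formula so that the threefold-product bound telescopes into the advertised product of a single Schatten norm and a single $L^{2p/(p+1)}$ norm.
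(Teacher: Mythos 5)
Your proof is correct and follows essentially the same path as the paper: Fourier/Parseval expansion, a per-representation Hölder bound, a weighted Hölder across the sum over $\rho$, and Hausdorff--Young (\cref{lem:hy}) to close. The one local variation is that you invoke a three-factor noncommutative Hölder inequality with exponents $\bigl(2p/(p-1),\,p,\,2p/(p-1)\bigr)$ directly on $\Tr\bigl(\wh x(\rho)^\dagger \wh f(\rho)\wh x(\rho)\bigr)$, whereas the paper first cycles the trace to $\Tr\bigl(\wh x(\rho)\wh x(\rho)^\dagger \wh f(\rho)\bigr)$ and then applies the two-factor inequality \cref{eq:Hold} together with the observation that the singular values of $\wh x(\rho)\wh x(\rho)^\dagger$ are the squares of those of $\wh x(\rho)$; the two devices yield the identical per-$\rho$ bound.
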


\begin{proof}
For each $\rho \in \wh{G}$ we have
\begin{align}
\abs{\ang{\wh x(\rho), \wh{f}(\rho) \wh x(\rho) ) }_\HS}
&= 
\abs{\Tr \paren{\wh x(\rho)^\dagger \wh{f}(\rho) \wh x(\rho)}}
= 
\abs{\Tr \paren{\wh x(\rho) \wh x(\rho)^\dagger \wh{f}(\rho))}} \nonumber
\\
&\le 
\snorm{\wh{f}(\rho)}_{S_p} \snorm{\wh x(\rho) \wh x(\rho)^\dagger}_{S_{p/(p-1)}}
=
\snorm{\wh{f}(\rho)}_{S_p} \snorm{\wh x(\rho) }_{S_{2p/(p-1)}}^2. \label{eq:qform-holder-step}
\end{align}
Here the inequality step uses the matrix H\"older inequality~\cref{eq:Hold}: $\Tr(AB) \le \norm{A}_{S_p} \norm{B}_{S_{p/(p-1)}}$ for all $p\in[1,\infty]$.
The last step uses that the singular values of a matrix $A$ are the square roots of the singular values of $AA^\dagger$.

Thus, applying the convolution and Parseval identities for the nonabelian Fourier transform (\cref{subsec:fourier}), we have
\begin{align*}
\abs{\ang{ x, f \ast x }}
&= \abs{ \sum_{\rho \in \wh{G}} d_{\rho} \ang{\wh x(\rho), \wh{f}(\rho) \wh x(\rho) ) }_\HS }
&& \text{\small [Convolution \& Parseval]}
\\
&\le  \sum_{\rho \in \wh{G}} d_{\rho} \snorm{\wh{f}(\rho)}_{S_p} \snorm{\wh x(\rho) }_{S_{2p/(p-1)}}^2 && \text{\small [by \cref{eq:qform-holder-step}]}
\\
&\le \paren{ \sum_\rho d_\rho \snorm{ \wh{f}(\rho) }_{S_p}^p }^{1/p}
    \paren{ \sum_\rho d_\rho \snorm{ \wh x(\rho) }_{S_{2p/(p-1)}}^{2p/(p-1)} }^{(p-1)/p} 
    &&\text{\small  [H\"older's inequality]}
\\
&= \norm{f}_{S_p} \norm{x}_{S_{2p/(p-1)}}^2 &&\text{\small [by \cref{eq:Sp}]}
\\
&\le \norm{f}_{S_p} \norm{x}_{2p/(p+1)}^2. &&\text{\small [by \cref{lem:hy}]} 
\end{align*}
This proves the desired inequality.
\end{proof}

Naor proved the following uniform bound on the Schatten norms of random Cayley graphs via a novel Azuma-type concentration inequality in uniformly smooth normed spaces. 

\begin{lemma}[{\cite[Lemma 4.1]{Nao12}}] \label{lem:schatten-concentration}
There exists a universal constant $C > 0$ with the following property.
For any positive integer $k$ and any finite group $G$, if $g_1, \dots, g_k \in G$ are chosen independently and uniformly at random, then, with probability at least $1/2$, the function $f \colon G \to \RR$ given by
\begin{equation}\label{eq:f-sum}
f = \mbm{1}_{\{g_1\}} + \mbm{1}_{\{g_1^{-1}\}} + \cdots + \mbm{1}_{\{g_k\}} + \mbm{1}_{\{g_k^{-1}\}} - \frac{2k}{\abs{G}}
\end{equation}
satisfies
\[
\norm{f}_{S_p} \le C \abs{G}^{-1+1/p} \sqrt{pk}
\]
simultaneously for every integer $p \ge 2$.
\end{lemma}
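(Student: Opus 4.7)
The strategy is to write $f$ as a sum of $k$ independent centered increments and apply a Schatten-valued moment inequality. Concretely, set
\[
f_i = \one_{\{g_i\}} + \one_{\{g_i^{-1}\}} - \frac{2}{\abs{G}},
\]
so that $f = \sum_{i=1}^k f_i$, the $f_i$ are independent, and $\EE f_i = 0$ as functions of the random $g_i$. The proof then has three ingredients: a deterministic per-summand bound, a $p$-th moment concentration inequality in $S_p$, and a Markov--union bound to package simultaneity.

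First I would bound each summand. By the Hausdorff--Young inequality \cref{lem:hy}, $\snorm{f_i}_{S_p} \le \snorm{f_i}_{L^{p/(p-1)}(G)}$. Since $f_i$ equals $-2/\abs{G}$ outside at most two points of $G$ on which it takes values of magnitude at most $2$, a direct computation yields $\snorm{f_i}_{L^{p/(p-1)}(G)} \lesssim \abs{G}^{-1+1/p}$ deterministically in $g_i$.

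The main analytic step is a $p$-th moment bound of the form
\[
\paren{\EE\snorm{f}_{S_p}^p}^{1/p} \lesssim \sqrt{p}\paren{\sum_{i=1}^k \snorm{f_i}_{S_p,\infty}^2}^{1/2},
\]
where $\snorm{\cdot}_{S_p,\infty}$ is the essential sup of $\snorm{\cdot}_{S_p}$ over the randomness. This follows from the noncommutative Khintchine inequality of Lust-Piquard--Pisier applied after Rademacher symmetrization, using that the Schatten class $S_p$ has type-$2$ constant of order $\sqrt{p}$; equivalently, it is an instance of Naor's Azuma-type inequality in $2$-uniformly smooth Banach spaces. This is the technical core, and the only step that really requires work; all other steps are standard. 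Combined with the previous step one obtains
\[
\paren{\EE \snorm{f}_{S_p}^p}^{1/p} \lesssim \sqrt{pk}\,\abs{G}^{-1+1/p}.
\]

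Finally, to pass from a $p$-th moment bound to the claimed simultaneous statement, I would apply Markov at the $p$-th moment. For some absolute constant $C_0$,
\[
\PP\sqb{\snorm{f}_{S_p} > \lambda C_0\sqrt{pk}\,\abs{G}^{-1+1/p}} \le \lambda^{-p}
\]
for each integer $p \ge 2$. Choosing $\lambda$ a sufficiently large absolute constant makes $\sum_{p=2}^\infty \lambda^{-p} \le 1/2$, so a union bound over all integers $p \ge 2$ gives the desired bound for every $p$ simultaneously with probability at least $1/2$. The main obstacle throughout is the noncommutative moment inequality with the correct $\sqrt{p}$ dependence, which is exactly what Naor's Banach-space Azuma inequality supplies.
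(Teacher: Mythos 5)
The paper does not give its own proof of this lemma: it is imported verbatim as \cite[Lemma~4.1]{Nao12}, with the paper only remarking that Naor established it ``via a novel Azuma-type concentration inequality in uniformly smooth normed spaces.'' Your proposal is therefore a reconstruction of Naor's argument rather than a comparison against anything in this paper, and as a reconstruction it is essentially correct and follows the intended route. The decomposition $f=\sum_i f_i$, the deterministic bound $\snorm{f_i}_{S_p}\le\snorm{f_i}_{L^{p/(p-1)}(G)}\lesssim\abs{G}^{-1+1/p}$ via \cref{lem:hy}, and the final Markov-plus-union-bound step (with $\sum_{p\ge 2}\lambda^{-p}<1/2$ for $\lambda$ a large absolute constant) are all sound, and the last step correctly explains the ``simultaneously for every $p$'' aspect.

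The only place where you gloss a bit is the main moment inequality, and the phrasing deserves tightening: the type-$2$ constant of $S_p$ being $\asymp\sqrt{p}$ controls only second moments, and passing to $p$-th moments via Kahane would cost another $\sqrt{p}$, giving an overall factor $p$ which is too lossy. What actually yields $\paren{\EE\snorm{\sum_i\epsilon_i f_i}_{S_p}^p}^{1/p}\lesssim\sqrt{p}\,\paren{\sum_i\snorm{f_i}_{S_p}^2}^{1/2}$ with the single factor $\sqrt{p}$ is the noncommutative Khintchine inequality with its sharp $\Theta(\sqrt{p})$ constant on the $p$-th moment (together with the triangle inequality in $S_{p/2}$ to convert the square-function term into $\paren{\sum\snorm{f_i}_{S_p}^2}^{1/2}$), applied conditionally on the $g_i$ after Rademacher symmetrization, or equivalently the moment form of Naor's martingale inequality whose constant behaves like $\max(D,\sqrt{q})$ rather than $D\sqrt{q}$ for a space with $2$-smoothness modulus $D$. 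Since you explicitly invoke both NCK and Naor's inequality, this is a matter of precision, not a gap; but you should be aware that the naive ``type-$2$ plus Kahane'' chain is not enough.
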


Using this concentration lemma, we can now prove \cref{thm:naor-nonabelian}. Unlike the proof in \cite{Nao12} for abelian groups, we do not need to rely on a bounded eigenbasis.

\begin{proof}[Proof of \cref{thm:naor-nonabelian}]
Let $g_1, \dots, g_k \in G$ be the random group elements generating the Cayley (multi)graph. Define $f \colon G \to \RR$ as in \cref{eq:f-sum}.

Let $X \subseteq G$ with $1 < \abs{X} \le \abs{G}/2$. Define a function $x \colon G \to \RR$ by 
\[
x = \abs{G \setminus X} \mbm{1}_X - \abs{X} \mbm{1}_{G \setminus X}.
\]
It is straightforward to check that
\[
\ang{x, f \ast x} = \frac{2k}{\abs{G}} \abs{X}\abs{G\setminus X} - e(X, G \setminus X).
\]
We also have
\[
\norm{x}_{2p/(p+1)} = \paren{\frac{ \abs{X} \abs{G\setminus X}^{\frac{2p}{p+1}} +   \abs{X}^{\frac{2p}{p+1}} \abs{G \setminus X} }{\abs{G}} }^{\frac{p+1}{2p}}.
\]
Applying the inequality $\abs{\ang{ f \ast x, x }} \le \norm{f}_{S_p} \norm{x}_{2p/(p+1)}^2$ from \cref{lem:qform-schatten}, and with the upper bound $\norm{f}_{S_p} \lesssim \abs{G}^{-1+1/p} \sqrt{pk}$ 
from \cref{lem:schatten-concentration}, we obtain that with probability at least $1/2$, one has
\[
\abs{\frac{2k}{\abs{G}} \abs{X}\abs{G\setminus X} - e(X, G \setminus X)} 
\lesssim \abs{G}^{-1+1/p} \sqrt{pk} 
\paren{\frac{ \abs{X} \abs{G\setminus X}^{\frac{2p}{p+1}} +  \abs{X}^{\frac{2p}{p+1}} \abs{G \setminus X}  }{\abs{G}} }^{\frac{p+1}{p}}
\]
simultaneously for all positive integers $p$.
Dividing both sides by $2k \abs{X}\abs{G\setminus X} / \abs{G}$, we obtain
\begin{align*}
\abs{ \frac{e(X, G \setminus X)}{\frac{2k}{\abs{G}} \abs{X}\abs{G\setminus X}}  - 1} 
&
\lesssim \abs{G}^{1/p} \sqrt{\frac{p}{k}} 
\paren{\frac{ \abs{X}^{\frac{1}{p+1}} \abs{G\setminus X}^{\frac{p}{p+1}} +  \abs{X}^{\frac{p}{p+1}}  \abs{G \setminus X}^{\frac{1}{p+1}} }{\abs{G}} }^{\frac{p+1}{p}}
\\
& \lesssim \abs{X}^{1/p} \sqrt{\frac{p}{k}},
\end{align*}
where in the last step we apply the inequality $x^t (1-x)^{1-t} + x^{1-t} (1-x)^t \le x^t + x^{1-t} \le 2x^t$ for $x = \abs{X}/\abs{G} \le 1/2$ and $t = 1/(p+1) \in [0,1/2]$.
Finally, setting $p = \ceil{\log \abs{X}}$, we see the final expression has an upper bound of $O(\sqrt{(\log \abs{X})/k})$.
\end{proof}


\end{document}